\documentclass[11pt]{amsart}

\usepackage{amsthm,amsmath,amssymb,geometry,listings,multirow,enumerate,textcomp,array}
\usepackage{tikz,tkz-graph,tkz-berge}
\usepackage{color,url,soul,seqsplit,longtable} 
\geometry{top=1in,left=1in,right=1in,bottom=1in}

\theoremstyle{definition} \newtheorem{cor}{Corollary}[section]
\theoremstyle{definition} \newtheorem{lem}[cor]{Lemma} 
\theoremstyle{definition} \newtheorem{prop}[cor]{Proposition} 
\theoremstyle{definition} \newtheorem{thm}[cor]{Theorem}
\theoremstyle{definition} 
\theoremstyle{definition} 
\theoremstyle{definition}

\theoremstyle{definition} \newtheorem{defn}[cor]{Definition}
\theoremstyle{definition} 
\theoremstyle{definition} 
\theoremstyle{definition}  
\theoremstyle{definition} 
\theoremstyle{definition}  
\theoremstyle{definition}  
\theoremstyle{definition}  
\theoremstyle{definition} 
\theoremstyle{definition} 
\theoremstyle{definition} 
\theoremstyle{definition} 
\theoremstyle{definition} 
\theoremstyle{definition} \newtheorem{conj}[cor]{Conjecture}

\newcommand{\Z}{\mathbb Z}
\newcommand{\N}{\mathbb N}

\newcommand{\up}{\textup}
\DeclareMathOperator{\PCP}{PCP}

\DeclareMathOperator{\children}{Children}

\renewcommand{\tt}{\texttt}
\newcommand{\LM}{\up{LM}}
\newcommand{\rref}{\up{rref}}

\author{Martin E.\ Malandro and Ken W.\ Smith}

\begin{document}


\title{Partial Difference Sets in $C_{2^n} \times C_{2^n}$}
\address{Department of Mathematics and Statistics, Box 2206, Sam Houston State University, Huntsville, TX 77341-2206}
\email{malandro@shsu.edu, kenwsmith54@gmail.com}

\begin{abstract}
{
We give an algorithm for enumerating the regular nontrivial partial difference sets (PDS) in the group $G_n = C_{2^n}\times C_{2^n}$. 
We use our algorithm to obtain all of these PDS in $G_n$ for $2\leq n\leq 9$, and we obtain partial results for $n=10$ and $n=11$.
Most of these PDS are new.
For $n\le 4$ we also identify group-inequivalent PDS.
Our approach involves constructing tree diagrams and canonical colorings of these diagrams. 
Both the total number and the number of group-inequivalent PDS in $G_n$ appear to grow super-exponentially in $n$.
For $n=9$, a typical canonical coloring represents in excess of $10^{146}$ group-inequivalent PDS, and
there are precisely $2^{520}$ reversible Hadamard difference sets. 
}
\end{abstract}

\keywords{
 strongly regular graph, partial difference set, abelian group, latin square graph, Hadamard difference set}

\subjclass[2010]{05E30, 05B10}

\maketitle


\section{Introduction}
\label{SecIntro}


The study of partial difference sets in finite groups lies in the intersection of the study of group actions and combinatorial structures. 
In this paper we study the partial difference sets in the group $G_n = C_{2^n} \times C_{2^n}$, where $C_n$ denotes the cyclic group of order $n$. 
While the enumeration of partial difference sets in some classes of groups can be achieved with theorems alone---for instance, a complete characterization of the partial difference sets in $C_{p^n}\times C_{p^n}$ is achieved for all {\em odd} primes $p$ in \cite{LeifmanMuzychuk95}---the enumeration of the partial difference sets in $G_n$ appears to be harder, and seems to require an algorithm combining algebraic conditions with a combinatorial search. 
The main goals of this paper are to describe such an algorithm and to discuss its output for $n\leq 11$. 
In particular, we describe all partial difference sets in $G_n$ for $n\leq 9$, and we provide partial results for $n=10, 11$. The vast majority of these partial difference sets are new.

We begin by providing background information and motivation for this project in this section, and we also discuss the partial difference sets we found in $G_n$ for $n\leq 11$. 
In Section \ref{SecOrbitTree} we discuss the characterization of partial difference sets in $G_n$ in terms of tree diagrams that we use throughout the paper. 
In Section \ref{SecCanonicalColorings} we define canonical colorings of these diagrams, and in Section \ref{SecLM} we recall a result from \cite{LeifmanMuzychuk95} that states what rows $0$, $1$, and $n$ of any canonical coloring representing a PDS in $G_n$ must look like. 
Our algorithm, detailed in Section \ref{SecAlgorithm}, fills in the rest of the rows. 
In Section \ref{SecExamples} we work out a few small examples by hand. 
Finally, in Appendix \ref{SecOutput} we record a representative for every regular nontrivial partial difference set in $G_n$ for all $2\leq n\leq 9$.

\subsection{Strongly Regular Graphs}
A strongly regular graph with parameters $(v,k,\lambda, \mu)$ is a (loopless) graph on $v$ vertices, with each vertex of degree $k$, such that the number of common neighbors of two distinct vertices $x$ and $y$ depends only on whether $x$ and $y$ are adjacent or not; it is $\lambda$ in the first case and $\mu$ otherwise. 
The complement of a graph exchanges edges with non-edges.
The complement of a strongly regular graph is strongly regular.

The study of strongly regular graphs dates back to Bose \cite{Bose1963}. 
Research into strongly regular graphs is a rich vein within algebraic combinatorics, involving graph theory, linear algebra, group theory and algebraic number theory.
As Peter Cameron explains,
``Strongly regular graphs stand on the cusp between the random and the
highly structured"%
(\cite{BeinekeWilson2004}, p. 204.)

The adjacency matrix of a $(v,k,\lambda, \mu)$ strongly regular graph has three eigenvalues: the degree $k$, and the two roots of the quadratic polynomial 
$x^2 - (\lambda-\mu)x-(k-\mu).$
If the graph is disconnected then $\mu=0$ and the graph is simply the union of complete graphs.
If $\mu=k$ then any two nonadjacent vertices have the same neighborhood and so any two adjacent vertices have no neighbors in common. 
This is the complement of the case $\mu=0.$
We consider the cases where $\mu =0$ or $\mu=k$ to be trivial and henceforth require that $\mu$ lie strictly between 0 and $k$.
This implies that zero is not an eigenvalue of the graph and so the adjacency matrix of the graph is invertible.

A graph in which every vertex has degree $k$ and where the adjacency matrix has three distinct eigenvalues (one of which must be $k$) is strongly regular.
A positive latin square graph (or ``pseudo-latin square graph") is a strongly regular graph $PL_t(m)$ with parameters
$(m^2, t(m-1), t^2-3t+m, t(t-1))$
and eigenvalues $k, m-t, -t$.
A negative latin square graph is a SRG $NL_t(m)$ with parameters 
$(m^2, t(m+1), t^2+3t-m, t(t+1))$; the eigenvalues of its adjacency matrix are $k, t, t-m$.
(The parameters of $NL_t(m)$ can be obtained from those of $PL_t(m)$ by replacing $t$ and $m$ by $-t$ and $-m$, respectively---see \cite{BeinekeWilson2004}, p. 207.)

Good sources on strongly regular graphs include the monographs \cite{GodsilRoyle2001} by Godsil and Royle, \cite{Biggs1993} by Norman Biggs and 
 \cite{BCN1989} by Brouwer, Cohen, and Neumaier.
Chapter 2 of Cameron and Van Lint's text \cite{CameronvanLint1991} also provides a very readable introduction.
Chapter 21 of \cite{vanLintWilson2001} has a section on strongly regular graphs, while Chapter 17 has a section on Latin Squares.

\subsection{Partial Difference Sets} 
Partial difference sets are the main objects of study in this paper.

\begin{defn}\label{DefPDS}
A  $(v,k,\lambda, \mu)$ {\em partial difference set} (PDS; plural also PDS) in a group $G$ of order $v$ is a set $S\subseteq G$ of size $k$ such that the multi-set 
$\{ s_1s_2^{-1} : s_1, s_2 \in S\}$ has $\lambda$ or $\mu$ occurrences of each nonidentity element $g$, depending on whether or not $g \in S,$ respectively.
A partial difference set $S$ is {\em regular} if it does not contain the identity element.
\end{defn}

Let $S$ be a partial difference set in a group $G$.
The regular representation of $G$ maps $S$ to a (0,1)-matrix that is an adjacency matrix of the associated Cayley graph.
If $G$ is abelian, the eigenvalues of that matrix are exactly the set of character values of $S$ 
$\{\chi(S)=\sum_{s\in S}\chi(s) : \chi\in G_n^*\},$ also called the {\em eigenvalues} of $S$.
The {\em nontrivial eigenvalues} of $S$ are the eigenvalues of $S$, except for the eigenvalue arising from the trivial character $\chi_0$, which maps $S$ to $k$.


A partial difference set is {\em regular} if it does not contain the identity element. 
If $S$ is a partial difference set and $1 \in S$ then removing $1$ from $S$ yields a regular PDS.  
This new PDS has eigenvalues 
one less than those of the original PDS.
  Conversely one may add the identity to a regular PDS to obtain a PDS with eigenvalues one more than the original PDS.
For this reason we will focus on regular PDS and assume $1 \not\in S.$
 Henceforth whenever we speak of a PDS we shall always mean regular nontrivial PDS.


If $S$ is a PDS in an abelian group $G$ then the nontrivial characters each map $S$ to one of two eigenvalues and so we may partition the nonidentity members of the dual group into two sets, depending on the value the characters assign to $S$.  
Each of these subsets of $G^*$ forms a PDS in $G^*$.
Such a ``dual PDS"  need not have the same parameters as the original PDS.

The Cayley graph $\Gamma_G(S)$ of a regular PDS $S$ is strongly regular.
Conversely, if a strongly regular graph has an automorphism group $G$ acting sharply transitively on the vertices then we may choose a vertex to label with the identity of $G$ and the neighbors of that vertex will form a PDS in $G$.

The complement $S^c$ of a PDS $S$ is the set of all nonidentity elements of $G$ not in $S$.
Note that if $|S|=k$ then $|S^c|=v-k-1$. 
 We define our complements in this way so that the complement of a regular PDS $S$ is also a regular PDS, and so that the complement of $\Gamma_G(S)$ is equal to $\Gamma_G(S^c)$.

The study of partial difference sets was initiated by Ma in \cite{Ma1984}, although strongly regular Cayley graphs appear in earlier work by Bridges and Mena \cite{BridgesMena1979}.
Later articles on PDS, including
\cite{LeifmanMuzychuk95}, \cite{Ma1994} and \cite{Ma1997},
demonstrate that outside of the Paley $(4n+1,2n,n-1,n )$ parameters, PDS do not exist in cyclic groups, and in abelian groups of rank two they occur rarely except in the family
 $C_{p^n} \times C_{p^n},$ where $C_{p^n}$ is the cyclic group of order $p^n$ and $p$ is prime. 
Theorem 5.6(b) in \cite{Ma1994} shows that regular nontrivial PDS do not exist in groups with Sylow $p$-subgroup $C_{p^a}\times C_{p^b}$ where $a \ne b.$
Thus in rank two abelian groups we should focus on $C_{p^n}\times C_{p^n}$.

The main result from \cite{LeifmanMuzychuk95} is a complete characterization of the PDS in $C_{p^n} \times C_{p^n}$ in the case that $p$ is an odd prime. 
The case studied in the present paper, $C_{2^n} \times C_{2^n}$, appears to be significantly more complicated, although \cite{LeifmanMuzychuk95} contains partial results for this case upon which we build in this paper.
An early investigation in PDS in $C_{2^n} \times C_{2^n}$ appears in the masters thesis of Stuckey and Gearheart (\cite{StuckeyThesis}, \cite{GearheartThesis}).

The latin square parameters provides a setting in which the dual PDS has the same parameters as the original PDS. Given a PDS $S$ with the parameters of $PL_t(m)$, define
 $S^*:= \{ \chi \in G^*: \chi(S) = m-t \}$. Then $S^*$ is a PDS in $G^*$ with the same parameters as $S$. If $S$ has the parameters of $NL_t(m)$, then the dual set
 $S^*:= \{ \chi \in G^*: \chi(S) = t-m \}$ is a PDS in $G^*$ with the same parameters as $S$.
 
\subsection{Partial Congruence Partitions}
Suppose a group of order $m^2$ has $t$ subgroups $H_1, H_2, ..., H_t$, each of order $m$, each pair meeting only in the the identity.  
Then the set $S$ consisting of the union of the subgroups, less the identity, gives a strongly regular Cayley graph 
with latin square parameters $PL_t(m)$.
If $t=1$ then the graph is trivial; it is a disjoint union of complete graphs.
If our group is $G_n=C_{2^n} \times C_{2^n}$ then we may create such graphs for $t=1$, $t=2$, and $t=3$. We denote these classes of graphs by $\PCP(1)$, $\PCP(2)$, and $\PCP(3)$.
If $t=2$ and $n > 2$, this construction gives the unique graph (up to group isomorphism) with those parameters, and the number of distinct PDS with this graph is $3 \cdot 2^{2n-2}$.
If $t=3$ and $n > 3$, this construction gives the unique graph (up to group isomorphism) with those parameters, and the number of distinct PDS with this graph is $2^{3n-3}$. (See the PCP(2) and PCP(3)$^c$ entries of Tables \ref{TableSummary2}--\ref{TableSummary3}.)

\subsection{Reversible Hadamard Difference Sets}

A Hadamard difference set (HDS) is a set $D$ of size $2m^2-m$ in a group $G$ of order $4m^2$ such that the multi-set
$\{ s_1s_2^{-1} : s_1, s_2 \in S\}$ has $\lambda=m^2-m$ occurrences of every nonidentity element $g\in G$.
A difference set $D$ is {\em reversible} if $D$ is equal to $D^{(-1)}$, its sets of inverses.

Let $D\subseteq G$ and let $1$ denote the identity of $G$. 
Then $D$ is a reversible HDS with $1 \in D$ if and only if $D^c$ is a PDS with degree $2m^2+m.$ 
Also $D$ is a reversible HDS with $1 \not \in D$ if and only if $D$ is a PDS with degree $2m^2-m.$ 
Note that for $G=G_n=C_{2^n}\times C_{2^n}$ we have $m = 2^{n-1}$ and hence in this case the PDS giving rise to reversible HDS have order 
$2m^2-m = 2^{2n-1}-2^{n-1}$ and 
$2m^2+m = 2^{2n-1}+2^{n-1}$. 
We will call such PDS in $G_n$ {\em reversible Hadamard partial difference sets (RHPDS) of type A and type B}, respectively, even though the RHPDS of type B are not actually themselves reversible HDS.

In general, a reversible Hadamard difference set with degree $m^2-m$ is a PDS with the parameters of $PL_{m-1}(m)$;
if the degree is  $m^2+m$ then the PDS has the parameters of $NL_{m+1}(m)$.
A good reference for Hadamard difference sets is \cite{DavisJedwab1993}.



\subsection{Summary of Our Results} 
\label{SecIntroSummary}
Our main results are our algorithm for enumerating the PDS in $G_n = C_{2^n} \times C_{2^n}$ (as presented in Section \ref{SecAlgorithm}) and its output for $n\leq 11$, which we discuss now and also in Appendix \ref{SecOutput}.

As discussed after Definition \ref{DefPDS}, we seek to enumerate only regular nontrivial PDS. 
Furthermore, since $|G_n|$ is even, $S\subseteq G_n$ is a PDS of even size if and only if $S^c$ is a PDS of odd size. 
Hence it suffices to enumerate only the PDS of a single parity. 
We choose to enumerate those of even size. 

It turns out that every PDS in $G_n$ is subset of the orbit tree of $G_n$ (Definition \ref{DefOrbitTree}). 
Furthermore any automorphism of the rooted orbit tree applied to a PDS is also a PDS, and hence we furthermore only need to enumerate one representative of each tree automorphism class of partial difference sets. 
We call the representatives that we enumerate {\em canonical} partial difference sets (Definition \ref{defCanonical}). 
A summary of the number of PDS in $G_n$ for $n\leq 11$ may be found in Table \ref{TableSummary}. 
This summary is broken down further in Tables \ref{TableSummary2}--\ref{TableSummary3}, and further still (into individual PDS) in Appendix \ref{SecOutput}.

The PDS in $G_n$ fall into three categories: The RHPDS, the partial difference sets of type PCP, and ``other", which we call {\em sporadic}. 
For $n\leq 9$ our enumeration of the (regular nontrivial) canonical PDS of even size is complete. 
For $n=10,11$ we halted our implementation of our algorithm before it was finished so our results for these values of $n$ are likely incomplete. 
We suspect that for $n=10$ our data is missing 16 canonical RHPDS of type B, while for $n=11$ it is missing 32 canonical RHPDS and one sporadic canonical PDS. 
All of the canonical PDS output by our program (including the complete list of canonical PDS of even size in $G_n$ for $n\leq 9$) are available at our website \cite{OurWebsite}.

Next we discuss the total number (i.e., not up to any kind of isomorphism) of regular nontrivial PDS in $G_n$ of even size, recorded in the ``\#total" columns of Tables \ref{TableSummary}--\ref{TableSummary3}. 
We computed these numbers from our canonical PDS with a straightforward orbit-stabilizer approach. 
The most interesting family of PDS in $G_n$ are the RHPDS, which appear to constitute the vast majority of the PDS in $G_n$---See Tables \ref{TableSummary2}--\ref{TableSummary3}.

The results of this project suggested the following conjecture
\footnote{James A.\ Davis and John B.\ Polhill asked for the total number of reversible Hadamard difference sets, leading to our conjecture.}:

\begin{conj}The total number of reversible Hadamard difference sets in $G_n = C_{2^n}\times C_{2^n}$ of:
\begin{itemize}
	\item type A is $(2^{2^n-2})(2^n+1)$, and
	\item type B is $(2^{2^n-2})(2^n-1)$.
\end{itemize}
Hence the total number of reversible Hadamard difference sets in $G_n$ is equal to $2^{2^n+n-1}$.
\end{conj}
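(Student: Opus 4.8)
The plan is to recast the count in character-theoretic terms and then attack it by induction on $n$. Writing $m=2^{n-1}$, the correspondence recorded above (reversible HDS $\leftrightarrow$ RHPDS, with the type distinguished by whether $1\in D$) lets me replace each set $D\subseteq G_n$ by the symmetric $\{\pm1\}$-valued function $\psi_D\colon G_n\to\{+1,-1\}$ with $\psi_D(g)=-1$ for $g\in D$ and $\psi_D(g)=+1$ otherwise, where ``symmetric'' means $\psi_D(g)=\psi_D(g^{-1})$, i.e.\ reversibility of $D$. Since $\hat\psi_D(\chi):=\sum_g\psi_D(g)\chi(g)$ equals $-2\chi(D)$ for $\chi\neq\chi_0$ and equals $v-2|D|$ for $\chi_0$, a symmetric $D$ is a reversible HDS exactly when $|\hat\psi_D(\chi)|=2^n$ for every $\chi\in G_n^*$, i.e.\ $\psi_D$ is a symmetric $\{\pm1\}$-valued bent function; the value $\hat\psi_D(\chi_0)=\pm2^n$ records the two Hadamard sizes. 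Let $R^-$ be the reversible HDS of the smaller size $2m^2-m$, corresponding to $\hat\psi_D(\chi_0)=+2^n$; the involution $\psi\mapsto-\psi$ shows $|R^-|$ is exactly half the number of symmetric $\{\pm1\}$ bent functions on $G_n$. Because a type A RHPDS is a $D\in R^-$ with $1\notin D$ and a type B RHPDS is $D^c$ for $D\in R^-$ with $1\in D$, we get $N_A+N_B=|R^-|$ and $N_A-N_B=\sum_{D\in R^-}\psi_D(1)$. Thus the conjecture is equivalent to the two scalar identities $|R^-|=2^{2^n+n-1}$ and $\sum_{D\in R^-}\psi_D(1)=2^{2^n-1}$.

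Both target quantities satisfy clean multiplicative recursions that I would aim to establish: the claimed closed forms give $|R^-(n)|=2^{2^{n-1}+1}\,|R^-(n-1)|$ and $\sum_{D\in R^-(n)}\psi_D(1)=2^{2^{n-1}}\sum_{D\in R^-(n-1)}\psi_D(1)$, with base case $G_1=C_2\times C_2$, where all eight $\{\pm1\}$ bent functions are automatically symmetric, $|R^-(1)|=4$, and $N_A(1)-N_B(1)=2$. Solving these two recursions recovers $|R^-(n)|=2^{2^n+n-1}$ and $N_A-N_B=2^{2^n-1}$, whence $N_A=\tfrac12(|R^-|+(N_A-N_B))=2^{2^n-2}(2^n+1)$ and $N_B=2^{2^n-2}(2^n-1)$. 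So it suffices to prove the two boxed multiplicative relations.

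To obtain these relations I would build reversible HDS of $G_n$ recursively from those of $G_{n-1}$, exploiting the index-four copies $2G_n\cong G_{n-1}$ and $G_n/K\cong G_{n-1}$, where $K=\{g\in G_n:g^2=1\}\cong C_2\times C_2$, together with the building-set description of Hadamard difference sets in abelian $2$-groups (cf.\ \cite{DavisJedwab1993}). The goal is to exhibit a lifting map under which each reversible HDS on $G_{n-1}$ has exactly $2^{2^{n-1}+1}$ symmetric bent preimages in $G_n$, with precisely a factor of $2$ among these governed by the value at the identity, so that the subcount respecting $\psi_D(1)$ carries the smaller factor $2^{2^{n-1}}$ and produces the signed recursion.

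The main obstacle is controlling this lifting map, and I expect it to be the genuinely hard part. Because $C_{2^n}$ is not elementary abelian, the functions involved are generalized bent functions whose Fourier coefficients only collapse to $\pm2^n$ once the symmetry hypothesis is used, so the building-block bookkeeping is delicate. The crux is a ``no obstruction'' statement: that the number of valid lifts is a constant power of two, independent of the base HDS, and that it is exactly $2^{2^{n-1}+1}$ rather than merely bounded by it. The smallness of this exponent, far below the dimension of the symmetric functions that could a priori vary, shows that the bent constraint rigidifies nearly all fiber choices, and making this rigidity precise---while propagating the single constrained identity coordinate through the recursion without disturbing the uniformity of the lift count---is where the real difficulty lies. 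It is precisely here that naive averaging over $G_n^*$ becomes circular (the mean of $c_\chi=\#\{D\in R^-:\chi(D)=+m\}$ over nontrivial $\chi$ is forced to be $|R^-|/2$ but this only re-encodes $N_A-N_B$), so a structural lifting argument, rather than a counting identity, is required.
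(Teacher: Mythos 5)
You are attacking a statement that the paper itself does not prove: it is an open conjecture, supported only by the authors' complete enumerations for $n\le 9$, so there is no proof of record to compare against and your argument must stand entirely on its own. Your preliminary reductions do stand. Encoding a reversible HDS $D$ by the symmetric function $\psi_D\colon G_n\to\{\pm1\}$, observing that the HDS property is equivalent to $|\hat\psi_D(\chi)|=2^n$ for every $\chi\in G_n^*$ (bentness), identifying the type A (resp.\ type B) RHPDS with the $D\in R^-$ having $1\notin D$ (resp.\ $1\in D$), and translating the conjecture into the equivalent pair of identities $|R^-|=2^{2^n+n-1}$ and $\sum_{D\in R^-}\psi_D(1)=2^{2^n-1}$ is all correct bookkeeping, as are the base case $n=1$ (eight bent functions on $C_2\times C_2$, four in $R^-$, giving $N_A=3$, $N_B=1$) and the observation that the closed forms are equivalent to the two multiplicative recursions with ratios $2^{2^{n-1}+1}$ and $2^{2^{n-1}}$.

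However, there is a genuine gap exactly where you locate it yourself: the lifting map from reversible HDS of $G_{n-1}$ to symmetric bent functions on $G_n$ is never defined, let alone shown to be surjective with constant fiber size $2^{2^{n-1}+1}$ and with the signed refinement $2^{2^{n-1}}$. This is not a technical detail to be filled in later; it \emph{is} the conjecture, merely re-expressed as a fiber-counting statement. Note that neither of the two natural maps relating $G_n$ to $G_{n-1}$ can serve as the lift's inverse: restricting a symmetric bent $\psi$ on $G_n$ to the subgroup $2G_n\cong G_{n-1}$ produces character sums of the form $\frac{1}{4}\left(\pm2^n\pm2^n\pm2^n\pm2^n\right)\in\{0,\pm2^{n-1},\pm2^n\}$, which need not all have modulus $2^{n-1}$, so the restriction of a bent function need not be bent; and averaging $\psi$ over cosets of $K=\{g\in G_n:g^2=1\}$ does not even yield a $\{\pm1\}$-valued function. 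So any fiber structure with uniform fiber size must come from a more refined decomposition---the paper's own machinery suggests where one would have to look (rows $0$, $1$, $n$ of a canonical coloring are pinned down by Theorem \ref{ThmLM}, and the \#total columns are produced from canonical representatives by an orbit--stabilizer count over the tree automorphism group of order $3\cdot2^{3(2^{n-1}-1)+1}$), but none of that apparatus appears in your proposal, and the uniformity of fiber sizes is precisely the kind of rigidity that the paper's data exhibits but cannot yet explain. As written, your argument reduces the conjecture to an equivalent unproven statement; it does not prove it.
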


Unfortunately, space constraints prevent us from displaying the evidence for this conjecture beyond $n=5$ in our tables. 
Nevertheless all of our computations for which we have complete results (i.e., $n\leq 9$) are of course consistent with this conjecture. 
Note that this conjecture suggests that there is a process for determining a reversible HDS in $G_n$ that consists of answering a sequence ${2^n+n-1}$ yes/no questions, and that none of these questions is ``Do you want your reversible HDS to be of type A or type B?" (In other words, none of these questions is ``Do you want the group identity to be in the HDS or not?")


Next we discuss the numbers in the ``\#up to $G_n$-equivalence" column of Table \ref{TableSummary}. 
Two PDS $S, S'\subseteq G_n$ are {\em $G_n$-equivalent} if there exists an automorphism $\sigma$ of $G_n$ for which $\sigma(S) = S'$. 
While it might seem natural to want to enumerate the PDS in $G_n$ up to $G_n$-equivalence, our results suggest that it is the notion of orbit tree equivalence, {\em not} the notion of $G_n$-equivalence, that is most natural for enumerating the PDS in $G_n$. 
The orbit tree of $G_n$, viewed as a rooted tree, has an automorphism group of order $3 \cdot 2^{3(2^{n-1}-1)+1}$. 
This dwarfs the order of the automorphism group of $G_n$ itself, which is only $3\cdot 2^{4n-3}$.
(See Lemma \ref{Automorphisms}.)
 An automorphism of $G_n$ induces an automorphism of the orbit tree, so PDS which are $G_n$-equivalent are also orbit tree equivalent. 
 However the converse is certainly not true in general based on order considerations alone, and for $n\geq 4$ there are many orbit tree equivalent PDS which are not $G_n$-equivalent. 

For $n\leq 4$ we computed these numbers with \texttt{GAP} \cite{GAP4}. 
For $n=5$ we did not attempt to sort the $\approx 83$ billion distinct PDS of even size into $G_n$-equivalence classes. 
For $n=5$ and higher, to obtain these numbers we simply divided the total number of PDS in $G_n$ by the order of the automorphism group of $G_n$.
 Despite the (likely dramatic) underestimate of the number of $G_n$-inequivalent PDS this produces, it is enough to demonstrate the importance of enumerating up to orbit tree equivalence rather than $G_n$-equivalence. 
 For $n=9$, for instance, it would be impossible to store a representative of each of the $\geq 1.5\times 10^{146}$ $G_9$-equivalence classes of PDS (as there are only $\approx 10^{80}$ atoms in the observable universe), whereas we can store a representative of each of the 35 orbit tree equivalence classes of PDS in $G_9$ in just a few hundred kilobytes.

Finally we mention that each PDS provides us with a strongly regular Cayley graph, and we have not considered the problem of how many PDS there are in $G_n$ up to {\em graph} isomorphism.
 We do not know how graph isomorphism classes relate to the tree and group equivalence classes but it is possible that orbit tree equivalence is the same as graph isomorphism.

%

\setlength\extrarowheight{3pt}
\begin{table}%
\begin{tabular}{|c|c|c|c|}
\hline
$n$&\#canonical&\#up to $G_n$-equivalence &\#total\\
\hline
2& 3& 3 & 32\\
\hline
3& 8& 8 & $1392$ \\
\hline
4& 13& 268 & $709312$\\
\hline
5& 12& $\geq 210262$ & $82678125312 \approx 8.3\times 10^{10}$ \\
\hline
6& 19& $\geq 1.2\times 10^{14}$ & $\approx 7.7\times 10^{20}$\\ 
\hline
7& 19& $\geq 2.5\times 10^{32}$ & $\approx 2.6\times 10^{40}$ \\ 
\hline
8& 35& $\geq 1.3\times 10^{70}$ & $\approx 2.1\times 10^{79}$ \\ 
\hline
9& 35& $\geq 1.5\times 10^{146}$ & $\approx4.1\times 10^{156}$\\
\hline
10& $\geq 50$ & $\geq 2.9\times 10^{299}$ & $\geq 1.2\times 10^{311}$ \\
\hline
11& $\geq 32$ & $\geq 1.2\times 10^{606} $ & $\geq 8.2\times 10^{619}$ \\
\hline
\end{tabular}
\caption{The regular nontrivial PDS in $C_{2^n} \times C_{2^n}$ of even size}
\label{TableSummary}
\end{table}

\setlength\extrarowheight{3pt}
\begin{table}%
\begin{tabular}{|c|c|c|c|c|c|}
\hline
$n$&$k$&type&\#canonical&\#total&$\approx$ \% of total in $G_n$\\
\hline
\hline
\multirow{2}{*}2
&6&RHPDSA, PCP(2) & 2 & 20 & 62.5\% \\
& 10 & RHPDSB, PCP(3)$^c$ & 1 & 12 & 37.5\% \\
\hline
\hline
\multirow{5}{*}3
& 14 & PCP(2) & 1 & 48 & 3.448\% \\
& 18 & Sporadic & 1 & 64 & 4.598\% \\
& 28 & RHPDSA & 2 & 576 & 41.38\% \\
& 36 & RHPDSB & 2 & 448 & 32.18\% \\
& 42 & PCP(3)$^c$ & 2 & 256 & 18.39\% \\
\hline
\hline
\multirow{7}{*}4
& 30 & PCP(2) & 1 & 192 & 0.02707\% \\
& 90 & Sporadic & 2 & 81920 & 11.55\% \\
& 120 & RHPDSA & 4 & 278528 & 39.27\% \\
& 136 & RHPDSB & 3 & 245760 & 34.65\% \\
& 150 & Sporadic & 1 & 98304 & 13.86\% \\
& 180 & Sporadic & 1 & 4096 & 0.5775\% \\
& 210 & PCP(3)$^c$ & 1 & 512 & 0.07218\% \\
\hline
\hline
\multirow{6}{*}5
& 62 & PCP(2) & 1 & 768 & $9.289 \times 10^{-7}$\% \\
& 372 & Sporadic & 1 & 1073741824 & 1.299\% \\
& 496 & RHPDSA & 4 & 35433480192 & 42.86\% \\
& 528 & RHPDSB & 4 & 33285996544 & 40.26\% \\
& 558 & Sporadic & 1 & 12884901888 & 15.58\% \\
& 930 & PCP(3)$^c$ & 1 & 4096 & $4.954 \times 10^{-6}$\% \\
\hline
\hline
\multirow{6}{*}6
& 126 & PCP(2) & 1 & 3072 & $3.965 \times 10^{-16}$\% \\
& 1890 & Sporadic & 1 & $\approx 7.4\times 10^{19}$ & 9.524\% \\
& 2016 & RHPDSA & 8 & $\approx 3.0\times 10^{20}$ & 38.69\% \\
& 2080 & RHPDSB & 7 & $\approx 2.9 \times 10^{20}$ & 37.50\% \\
& 2142 & Sporadic & 1 & $\approx 1.1\times 10^{20}$ & 14.29\% \\
& 3906 & PCP(3)$^c$ & 1 & 32768 & $4.229 \times 10^{-15}$\% \\
\hline
\end{tabular}
\caption{The regular nontrivial PDS in $C_{2^n} \times C_{2^n}$ of even size, by size ($k$), $n=2,\ldots, 6$}
\label{TableSummary2}
\end{table}

\setlength\extrarowheight{3pt}
\begin{table}%
\begin{tabular}{|c|c|c|c|c|c|}
\hline
$n$&$k$&type&\#canonical&\#total&$\approx$ \% of total in $G_n$\\
\hline
\hline
\multirow{5}{*}7
& 254 & PCP(2) & 1 & 12288 & $4.751 \times 10^{-35}$\% \\
& 8128 & RHPDSA & 8 & $\approx 1.1 \times 10^{40}$ & 42.43\% \\
& 8256 & RHPDSB & 8 & $\approx 1.1 \times 10^{40}$ & 41.78\% \\
& 8382 & Sporadic & 1 & $\approx 4.1 \times 10^{39}$ & 15.79\% \\
& 16002 & PCP(3)$^c$ & 1 & 262144 & $1.014 \times 10^{-33}$\% \\
\hline
\hline
\multirow{6}{*}8
& 510 & PCP(2) & 1 & 49152 & $2.307 \times 10^{-73}$\% \\
& 32130 & Sporadic & 1 & $\approx 3.7 \times 10^{78}$ & 17.39\% \\
& 32640 & RHPDSA & 16 & $\approx 7.4 \times 10^{78}$ & 34.92\% \\
& 32896 & RHPDSB & 15 & $\approx 7.4 \times 10^{78}$ & 34.65\% \\
& 33150 & Sporadic & 1 & $\approx 2.8 \times 10^{78}$ & 13.04\% \\
& 64770 & PCP(3)$^c$ & 1 & 2097152 & $9.843 \times 10^{-72}$\% \\
\hline
\hline
\multirow{5}{*}9
& 1022 & PCP(2) & 1 & 196608 & $4.824 \times 10^{-150}$\% \\
& 130816 & RHPDSA & 16 & $\approx 1.7 \times 10^{156}$ & 42.19\% \\
& 131328 & RHPDSB & 16 & $\approx 1.7 \times 10^{156}$ & 42.02\% \\
& 131838 & Sporadic & 1 & $\approx 6.4 \times 10^{155}$ & 15.79\% \\
& 260610 & PCP(3)$^c$ & 1 & 16777216 & $4.116 \times 10^{-148}$\% \\
\hline
\hline
\multirow{6}{*}{10}
& 2046 & PCP(2) & $1$ & $786432$ &  \\
& 521730 & Sporadic & $\geq 1$ & $\geq 4.6 \times 10^{310}$ &  \\
& 523776 & RHPDSA & $\geq 32$ & $\geq 4.6 \times 10^{310}$ & \\
& 524800 & RHPDSB & $\geq 15$ & $\geq 1.1 \times 10^{310}$ &  \\
& 525822 & Sporadic & $\geq 1$ & $\geq 1.7 \times 10^{310}$ &  \\
& 1045506 & PCP(3)$^c$ & $1$ &  134217728 & \\ 
\hline
\hline
\multirow{4}{*}{11}
& 4094 & PCP(2) & $1$ &  3145728 & \\ 
& 2096128 & RHPDSA & $\geq 16$ & $\geq 4.1 \times 10^{618}$ & \\
& 2098176 & RHPDSB & $\geq 16$ & $\geq 4.1 \times 10^{618}$ & \\
& 4188162 & PCP(3)$^c$ &  $1$ &  1073741824 & \\ 
\hline
\end{tabular}
\caption{The regular nontrivial PDS in $C_{2^n} \times C_{2^n}$ of even size, by size ($k$), $n=7,\ldots, 11$. Note that this table is likely incomplete for $n=10,11$.}
\label{TableSummary3}
\end{table}

\section{Orbit trees and eigenvalues}
\label{SecOrbitTree}

For $n\in \N$ we take the cyclic group $C_n$ to be $C_n = \left\{\frac{x}{n}:x\in \{0 ,\ldots, n-1\} \right \}$ under the operation of addition mod 1. 
In this way $C_{2^n}$ is a subgroup of $C_{2^{n+1}}$. 
Let $n\geq 2$ and $G_n = C_{2^n}\times C_{2^n}$. 
In this section we discuss the orbit tree $T_n$ of $G_n$, we describe how to compute the eigenvalues of a subset $S$ of $T_n$, and we recall how the eigenvalues of $S$ reveal whether or not $S$ is a PDS. 
Recall that by PDS we will always mean regular nontrivial PDS.




The strongly regular graphs corresponding to our PDS all have integer eigenvalues and since $G_n$ is abelian, a multiplier theorem (Theorem 4.1 in \cite{Ma1994}, p. 228) applies.
This theorem says that a partial difference set $S$ in $G_n$ is fixed by any map
$g \mapsto u\cdot g$ where 
$u$ is an odd integer.
The maps $g = (a,b) \mapsto u\cdot g = (ua, ub)$ ($u$ odd) are called {\em multipliers} and 
$\{ u : u \text{ odd }, 1 \le u \le 2^n\} = U(2^n)$ is the {\em multiplier group} (which is a group under multiplication mod $2^n$).


A directed rooted tree is a tree with a unique root, where every node in the tree has a unique directed path beginning at it and ending at the root.  
We describe a node pointing to another as a {\em child} pointing to a {\em parent}; $\children(X)$ will represent the set of all children of the node $X$.
\begin{defn}
\label{DefOrbitTree}
The {\em orbit tree} $T_n$ of $G_n$ is the directed rooted tree given by the following construction.
\begin{enumerate}
	\item The root, i.e., the unique node at level 0 of $T_n$, is the singleton set consisting of the identity: $\{(0,0)\}$.
	\item The three children of the root, i.e., the three nodes at level 1 of $T_n$,  are the singleton sets consisting of the elements of $G_n$ of order two: $\{(0,\frac{1}{2})\}$, $\{(\frac{1}{2},0)\}$, and $\{(\frac{1}{2},\frac{1}{2})\}$.
	\item For a node $N$ at level $i$ with $1\leq i<n$ of $T_n$, $N$ has two children 
 $C_0$, $C_1$, given by the following construction: Choose any $x\in N$. Then
	\begin{align*}
	C_0 &= \left\{\frac{q}{2}x:q\up{ is odd}, 1\leq q<2^{i+1}\right\},\\
	C_1 &= \{x+t:x\in C_0 \},
	\end{align*}
	where $t$ is either of the elements of $G_n$ of order two not along the path to the root from $N$. (Any choice for $x$ gives the same set $C_0$, and either choice for $t$ gives the same set $C_1$.)
\end{enumerate}
\end{defn}
The orbit tree is so named because the nodes of $T_n$ are the orbits of $G_n$ under the action of the multiplier group $U(2^n)$.
The nodes at level $i$ of $T_n$ form a partition of the elements of $G_n$ of order $2^i$. 
Note that removing level $n$ of $T_n$ gives $T_{n-1}$.
(The orbit tree $T_n$ is explicitly used in  the Masters thesis \cite{GearheartThesis}.
An equivalent and independently discovered version of the orbit tree also appears in \cite{LeifmanMuzychuk95}.)

It will be useful to index the nodes in $T_n$ by ordered pairs.
Let $T_n(0,0)$ denote the root of $T_n$. 
In general $T_n(i,j)$ will denote the node of $T_n$ in the $i$th ``row" and $j$th ``column" of $T_n$. 
Specifically, let $T_n(1,0) = \{(0,\frac{1}{2})\}$, $T_n(1,1) = \{(\frac{1}{2},0)\}$, and $T_n(1,2) = \{(\frac{1}{2},\frac{1}{2})\}$, and given $T_n(i,j)$ with $i\geq 1$, $T_n(i+1,2j)$ and $T_n(i+1,2j+1)$ refer to the children $C_0$ and $C_1$ of $T_n(i,j)$, respectively. We emphasize that $|T_n(i,j)| = 2^{i-1}$ for $i>0$.

The multiplier theorem guarantees that any PDS $S$ in $G_n$ will be a union of elements from the nodes of $T_n$. (This statement may also be found as part of Theorem 1.6 in \cite{LeifmanMuzychuk95}.) Whenever we speak of a subset of $T_n$ we shall always mean a subset of the nodes of $T_n$.
If $S\subseteq T_n$
 we obtain a subset $\bar S\subseteq G_n$ in the obvious way:
$\bar S = \cup_{N\in S} N.$
When $S\subseteq T_n$ we shall regard $S$ a subset of $G_n$ as well, writing $S\subseteq G_n$ instead of $\bar S\subseteq G_n$. 
Similarly, if $X\subseteq G_n$ and there is a subset $S\subseteq T_n$ for which $\bar S = X$, we shall regard $X$ as a subset of $T_n$ as well. In this way we regard partial difference sets in $G_n$ as subsets of $T_n$.

The unique path from a node to the root 
of $T_n$ is a  cyclic subgroup of $G_n$ and so each node may be identified with  a particular cyclic subgroup.



For $S\subseteq T_n$, we define the values $\{\chi(S) : \chi\in G_n^*\}$ to be in agreement with the eigenvalues of $S$ thought of as a subset of $G_n$: 
For $N\in T_n$ and $\chi\in G_n^*$ we set $\chi(N) = \sum_{g\in N} \chi(g)$, and for $S\subseteq T_n$ we set $\chi(S) = \sum_{N\in S}\chi(N)$. The following well-known theorem specifies exactly when $S\subseteq T_n$ is a PDS in terms of the eigenvalues of $S$. 
The statements in this theorem may be found among the results in \cite{LeifmanMuzychuk95}.

\begin{thm}
\label{ThmPDSEig}
$S\subseteq T_n$ is a
 PDS if and only if $S$ has exactly three distinct eigenvalues $k, r, s$, with $k = \chi_0(S)$ being the trivial eigenvalue, $r>0$, $s<0$, $k\equiv r\equiv s\pmod{2^n}$, $s = r-2^n$, and $\chi(S)\in\{r,s\}$ for all nontrivial characters $\chi$. 
\end{thm}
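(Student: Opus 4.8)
The plan is to prove both implications by passing to the associated Cayley graph $\Gamma = \Gamma_{G_n}(\bar S)$ and identifying the adjacency eigenvalues with the character values $\chi(\bar S)$. Observe first that, since each node of $T_n$ is a $U(2^n)$-orbit and $-1 \in U(2^n)$, any $S \subseteq T_n$ satisfies $\bar S = \bar S^{(-1)}$, so $\Gamma$ is an undirected $k$-regular graph with $k = \chi_0(S)$. For the easier reverse implication, assume $S$ has exactly three distinct eigenvalues $k, r, s$ with $\chi(S)\in\{r,s\}$ for every nontrivial $\chi$. Then the adjacency matrix of $\Gamma$ has exactly three distinct eigenvalues, so by the fact recalled in Section~\ref{SecIntro} that a regular graph with three distinct adjacency eigenvalues is strongly regular, $\Gamma$ is an SRG; since $G_n$ acts sharply transitively on the vertices, the neighbourhood $\bar S$ of the identity is a PDS. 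Moreover, because $r,s\neq k$ for all nontrivial $\chi$, the eigenvalue $k$ has multiplicity one, so $\Gamma$ is connected and, being non-complete (three distinct eigenvalues), satisfies $0<\mu<k$; together with $s<0$ this certifies that the PDS is regular and nontrivial.

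\textbf{Forward implication, the quadratic and integrality.} Now assume $\bar S$ is a regular nontrivial PDS. Using $\bar S=\bar S^{(-1)}$ from the first paragraph, the group-ring identity $\bar S^2 = (k-\mu)\cdot 1 + (\lambda-\mu)\bar S + \mu G_n$ in $\Z[G_n]$, evaluated at a nontrivial character (so $\chi(G_n)=0$), gives $\chi(S)^2-(\lambda-\mu)\chi(S)-(k-\mu)=0$. Hence every nontrivial $\chi(S)$ is a root of $x^2-(\lambda-\mu)x-(k-\mu)$, with $rs=\mu-k$ and $r+s=\lambda-\mu$; nontriviality ($0<\mu<k$) gives $rs<0$, so after labelling $r>0>s$, and both values occur (else $\mu\in\{0,k\}$), yielding exactly three distinct eigenvalues $k,r,s$. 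The key point that $r,s\in\Z$ I would obtain from the multiplier theorem via Galois descent: identify $\mathrm{Gal}(\mathbb{Q}(\zeta_{2^n})/\mathbb{Q})$ with $U(2^n)$ through $\sigma_u(\zeta_{2^n})=\zeta_{2^n}^{\,u}$. For odd $u$ one has $\sigma_u(\chi(g))=\chi(g)^u=\chi(ug)$, whence $\sigma_u(\chi(S))=\chi(u\bar S)=\chi(\bar S)$ because $u\bar S=\bar S$ by the multiplier theorem. Thus each $\chi(S)$ is fixed by the whole Galois group, so it lies in $\mathbb{Q}$; being an algebraic integer it lies in $\Z$, and therefore $r,s\in\Z$.

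\textbf{Node values and the mod-$2$ congruence.} The remaining, $2^n$-specific conditions $s=r-2^n$ and $k\equiv r\equiv s\pmod{2^n}$ are where the structure of $G_n$ (not mere SRG feasibility) is essential. My main tool is the evaluation of a character on a single node. A node $N$ at level $\ell\ge 1$ is the set of generators of a cyclic subgroup of order $2^\ell$, so $\chi(N)$ is a Ramanujan sum: writing $2^c$ for the order of the restriction of $\chi$ to that subgroup, one computes $\chi(N)=2^{\ell-1}$ if $c=0$, $\chi(N)=-2^{\ell-1}$ if $c=1$, and $\chi(N)=0$ if $c\ge 2$. In particular $2^{\ell-1}\mid \chi(N)$, while a level-$1$ node contributes $\pm 1$; since the higher levels contribute even amounts to both $\chi(S)=\sum_{N\in S}\chi(N)$ and to $k=\sum_{N\in S}2^{\ell(N)-1}$, reducing modulo $2$ gives $\chi(S)\equiv\#\{\text{level-}1\text{ nodes of }S\}\equiv k\pmod{2}$, i.e. $r\equiv s\equiv k\pmod{2}$.

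\textbf{The main obstacle.} Upgrading this to a congruence modulo $2^n$, and in particular pinning the gap at exactly $s=r-2^n$, is the hard part. To obtain the full congruence I would reduce the group-ring identity modulo successive powers of the prime $(1-\zeta_{2^n})$ of $\Z[\zeta_{2^n}]$ and use $(2)=(1-\zeta_{2^n})^{2^{n-1}}$ to propagate the mod-$2$ congruence upward, tracking how $\chi(S)$ changes when $\chi$ is multiplied by an order-$2$ character (which, by the node-value computation, flips the sign of precisely the top-level contributions). The exact gap $r-s=2^n$ --- equivalently, that the parameters are of positive or negative latin-square type with $m=2^n$ --- is the genuine obstacle: abstract SRG feasibility alone permits eigenvalue gaps other than $\sqrt{v}=2^n$, so the conclusion must use that $r$ and $s$ are \emph{simultaneously} realized as $\Z[\zeta_{2^n}]$-valued sums over a union of nodes of $T_n$, the group $G_n$ having exponent $2^n$. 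This is exactly the structural content isolated in \cite{LeifmanMuzychuk95}, on which I would rely to close the final step.
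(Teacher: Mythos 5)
The first thing to note is that the paper contains no proof of Theorem \ref{ThmPDSEig} to compare against: the theorem is presented as well known, with the remark that its statements ``may be found among the results in \cite{LeifmanMuzychuk95}.'' Judged as a self-contained argument, your proposal is correct in its routine parts but has a genuine gap exactly where the theorem has content. The reverse implication (regular graph with three eigenvalues $\Rightarrow$ SRG $\Rightarrow$ PDS), the group-ring quadratic, the integrality of $r,s$ via the multiplier theorem and Galois descent, and the node-value (Ramanujan sum) computation giving $r\equiv s\equiv k\pmod 2$ are all sound and standard. But the assertions that distinguish this theorem from generic SRG feasibility --- $s=r-2^n$ and the congruences mod $2^n$ --- are never proved: your final paragraph offers a strategy (reduction modulo powers of the totally ramified prime $(1-\zeta_{2^n})$, tracking order-$2$ character twists) without carrying it out, and then states that you ``would rely'' on \cite{LeifmanMuzychuk95} to close the step. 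Since Theorem \ref{ThmPDSEig} is itself one of the results of \cite{LeifmanMuzychuk95}, this does not prove the hard step; it cites it. Note also that the two ``hard'' conditions really reduce to one: once $r-s=2^n$ is known, the congruence $k\equiv r\equiv s\pmod{2^n}$ follows immediately from the first-moment identity $k+fr+gs=0$ (where $f,g$ are the multiplicities of $r,s$), obtained by summing $\chi(S)$ over all characters. So the entire non-routine content is the single claim $r-s=2^n$, i.e., that every PDS in $G_n$ has positive or negative latin square type parameters with $m=2^n$ --- and that is precisely what is missing.

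There is a second, smaller gap in the reverse direction: you claim that connectivity ``together with $s<0$ \ldots certifies that the PDS is regular and nontrivial.'' Nontriviality ($0<\mu<k$) does follow from $r>0>s$ (this is what rules out unions of cliques and complete multipartite graphs), but regularity --- that the root of $T_n$, i.e., the identity of $G_n$, does not lie in $S$ --- cannot be derived from the stated eigenvalue conditions, because those conditions do not exclude it. Indeed, if $S_0\subseteq T_n$ satisfies all the conditions with eigenvalues $(k_0,r_0,s_0)$ and $s_0\le -2$, then $S_0\cup\{T_n(0,0)\}$ has eigenvalues $(k_0+1,r_0+1,s_0+1)$ and still satisfies every condition, yet it contains the identity and hence is not a PDS in the paper's sense; concretely, the PCP(2) set in $C_4\times C_4$ with $(k,r,s)=(6,2,-2)$ yields $(7,3,-1)$ after adjoining the root. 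So this step of your argument cannot be repaired as written: the exclusion of the identity has to be imposed as a convention (as the paper implicitly does --- its LM partial colorings and homogenous sets always color the root $0$), not deduced from $s<0$. Flagging this would also expose an imprecision in the statement of the theorem itself.
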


We emphasize that for any $S\subseteq G_n$, $k = |S|$. 
Fortunately, it turns out that we do not have to examine $\chi(S)$ for {\em every} character $\chi\in G_n^*$ to tell whether or not $S\subseteq T_n$ is a PDS. 
In general, as Lemma \ref{LemGalois} below shows, we will only need to check $|T_n|$ many characters.


The nodes of the tree $T_n$ are the orbits of the additive group $G_n$ under the action of $U(G_n)$.
The dual group $G_n^*$ is isomorphic to $G_n$ and so it also may be partitioned into orbits under the action of $U(G^*_n).$ 
The orbit of a character $\chi\in G^*$ is all characters $\chi^m$ where $m$ is any odd integer.

\begin{lem}
\label{LemGalois}
If characters 
$\chi_1, \chi_2\in G_n^*$ are in the same $U(G_n^*)$-equivalence class then for any node $M\in T_n$,  $\chi_1(M)=\chi_2(M)$.
\end{lem}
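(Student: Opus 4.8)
The plan is to exploit the fact that the action defining a node $M$ of $T_n$ and the action relating $\chi_1$ to $\chi_2$ both come from the same pool of odd multipliers. First I would unwind the hypothesis: as recalled immediately before the lemma, the $U(G_n^*)$-orbit of a character $\chi$ is exactly $\{\chi^m : m \text{ odd}\}$, so $\chi_1$ and $\chi_2$ lying in the same equivalence class means $\chi_2 = \chi_1^m$ for some odd integer $m$. The goal then becomes to show $\chi_1^m(M) = \chi_1(M)$ for every node $M$.

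The central observation is the identity $\chi_1^m(g) = \chi_1(g)^m = \chi_1(mg)$, valid for every $g\in G_n$ because $\chi_1$ is a homomorphism on the additive group $G_n$ and $mg = g + \cdots + g$ ($m$ summands). Substituting this into the definition $\chi_2(M) = \sum_{g\in M}\chi_2(g)$ gives $\chi_2(M) = \sum_{g\in M}\chi_1(mg)$. Because $M$ is, by Definition \ref{DefOrbitTree}, a single orbit of $G_n$ under the multiplier group $U(2^n)$, and $m$ is odd, the multiplier $g\mapsto mg$ sends $M$ into $M$; since $m$ is invertible modulo $2^n$ this map is injective, hence a bijection of the finite set $M$ onto itself. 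Reindexing the sum along this bijection (setting $g' = mg$) yields $\sum_{g\in M}\chi_1(mg) = \sum_{g'\in M}\chi_1(g') = \chi_1(M)$, which is exactly the desired equality.

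I expect essentially no serious obstacle here. The only point requiring care is the verification that $g\mapsto mg$ permutes $M$, and this is immediate once one recalls that the nodes of $T_n$ are precisely the $U(2^n)$-orbits of $G_n$, so each node is closed under, and permuted bijectively by, every odd multiplier. The remaining subtlety is purely notational --- keeping straight that the $m$th power of a character in $G_n^*$ corresponds to precomposition with multiplication by $m$ on $G_n$ --- after which the result reduces to a one-line reindexing of a finite sum.
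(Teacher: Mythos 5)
Your proposal is correct and follows essentially the same argument as the paper: write $\chi_2 = \chi_1^m$ for an odd $m$, observe that $\chi_2(g) = \chi_1(mg)$, note that $g \mapsto mg$ permutes the node $M$ (since $M$ is a $U(2^n)$-orbit), and reindex the sum. Your version merely spells out the bijectivity of the multiplier map in slightly more detail than the paper does.
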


\begin{proof}
If $\chi_1$ and $\chi_2$ are in the same $U(G_n^*)$-equivalence class then there is an odd integer $m$ such that 
$\chi_2=\chi_1^m$ and so, for any group element $g \in G$, $\chi_2(g)=\chi_1(g^m).$
But the map $g\mapsto g^m$ is a permutation on a node $M$ of $T_n$
and so 
\[ \chi_2(M)=\sum_{g\in M}\chi_2(g) =\sum_{g^m\in M}\chi_1(g) =\sum_{g\in M}\chi_1(g) =\chi_1(M).\]
\end{proof}

For functions $f,g$ on $T_n$, we say $f$ and $g$ {\em agree} on $T_n$ if $f(N)=g(N)$ for all $N\in T_n$.
The above lemma tells us that $U(G_n^*)$-equivalent characters agree on $T_n$.
We now describe explicitly the action of the characters on $T_n$.

Let $N\in T_n$ be a node at level $n$, so $N = T_n(n,j)$ for some $j$. 
Let $P^+$ be the path from $N$ to the root of $T_n$ (including both $N$ and the root), and let 
\[
P^- = \{X\in T_n: \up{$X$ is a child of some $M\in P^+$ and $X\notin P^+$} \}.
\]
Define $\chi_N:T_n \rightarrow \Z$ by, for $M\in T_n$,
\[
\chi_N(M) = 
\begin{cases}
|M| & \up{if $M\in P^+$};\\
-|M| & \up{if $M\in P^-$};\\
0 & \up{otherwise.}
\end{cases}
\]

\begin{thm}
\label{ThmHighestOrderCharDescription}

If $\chi\in G_n^*$ is a character of highest order, there exists a unique $N\in T_n$ at level $n$ so that $\chi$ and $\chi_N$ agree on $T_n$.
\end{thm}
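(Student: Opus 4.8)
The plan is to pass from the character $\chi$ to its kernel and exploit the identification of tree nodes with cyclic subgroups. Recall from the discussion after Definition~\ref{DefOrbitTree} that the path from any node $M$ to the root assembles into a cyclic subgroup of $G_n$; write $H_M$ for this subgroup, so $|H_M| = 2^i$ when $M$ sits at level $i$, and $M$ itself consists precisely of the generators of $H_M$, that is, $M = H_M\setminus 2H_M$ for $i\ge 1$. I would establish two structural facts first. Writing $\chi(\tfrac{x}{2^n},\tfrac{y}{2^n}) = \zeta^{\alpha x + \beta y}$ with $\zeta = e^{2\pi i/2^n}$ and $\alpha,\beta\in\Z/2^n\Z$, the order of $\chi$ is $2^n/\gcd(\alpha,\beta,2^n)$, so the highest-order hypothesis forces at least one of $\alpha,\beta$ to be odd. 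Solving $\alpha x + \beta y\equiv 0$ for the coordinate carrying the odd (hence invertible) coefficient then exhibits $K:=\ker\chi$ as \emph{cyclic} of order $2^n$. Second, the level-$n$ nodes of $T_n$ are in bijection with the cyclic subgroups of $G_n$ of order $2^n$ (each such node is the full set of generators of one such subgroup), so $K$ singles out a unique level-$n$ node $N$ with $H_N = K$. Letting $P^+$ be the path from $N$ to the root, the family $\{H_M : M\in P^+\}$ is exactly the chain of subgroups of the cyclic group $K$, one of each order $2^0,2^1,\dots,2^n$.

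The heart of the argument is a local evaluation of $\chi$ on a single node. For a node $M$ at level $i\ge 1$ I would use $M = H_M\setminus 2H_M$ to write $\chi(M) = \sum_{g\in H_M}\chi(g) - \sum_{g\in 2H_M}\chi(g)$, and then invoke orthogonality of characters: for any subgroup $H\le G_n$ the sum $\sum_{g\in H}\chi(g)$ equals $|H|$ if $\chi|_H$ is trivial and $0$ otherwise. This yields the trichotomy $\chi(M) = |M|$ if $H_M\subseteq K$; $\chi(M) = -|M|$ if $2H_M\subseteq K$ but $H_M\not\subseteq K$; and $\chi(M) = 0$ if $2H_M\not\subseteq K$. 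The root is handled separately and trivially gives $\chi(M)=1=|M|$.

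It then remains to translate these three subgroup conditions into membership in $P^+$ and $P^-$. Since $K$ is cyclic it has a unique subgroup of each order $2^i$, so $H_M\subseteq K$ holds if and only if $H_M$ is that subgroup, i.e.\ if and only if $M\in P^+$; this matches $\chi_N(M)=|M|$. Noting that $2H_M = H_{\parent(M)}$, the condition $2H_M\subseteq K$ says exactly that $\parent(M)\in P^+$, so the middle case (namely $\parent(M)\in P^+$ together with $M\notin P^+$) is precisely the defining condition of $P^-$, matching $\chi_N(M)=-|M|$; the remaining nodes satisfy $\parent(M)\notin P^+$, i.e.\ $M\notin P^+\cup P^-$, matching $\chi_N(M)=0$. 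Hence $\chi$ and $\chi_N$ agree on all of $T_n$. For uniqueness I would evaluate any competing $\chi_{N'}$ at the leaf $N$: since $\chi(N)=|N|>0$ while, for any level-$n$ node $N'\neq N$, the leaf $N$ does not lie on the path from $N'$ to the root, we get $\chi_{N'}(N)\in\{-|N|,0\}$, so $N'$ cannot reproduce $\chi$ and $N$ is forced.

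The step I expect to be the main obstacle is not the orthogonality computation, which is standard, but the bookkeeping that ties the algebraic conditions $H_M\subseteq K$ and $2H_M\subseteq K$ to the combinatorial notions of path and sibling: one must verify carefully that $2H_M$ is genuinely the subgroup attached to $\parent(M)$, and that the uniqueness-of-subgroups property of the cyclic group $K$ is what pins the $+|M|$ values exactly to $P^+$. The one place where the highest-order hypothesis is essential is in showing $K$ is cyclic rather than of the form $C_{2^a}\times C_{2^b}$, so that claim should be stated with care.
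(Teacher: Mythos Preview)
Your proof is correct and follows essentially the same route as the paper: identify $\ker\chi$ as a cyclic subgroup of order $2^n$, take $N$ to be the level-$n$ node it determines, and then verify $\chi(M)=\chi_N(M)$ by cases according to where $M$ sits relative to the path $P^+$. The only cosmetic difference is that you compute $\chi(M)$ via the decomposition $M=H_M\setminus 2H_M$ and subgroup orthogonality, whereas the paper evaluates $\chi(g)$ elementwise (as $1$, $-1$, or a primitive $2^\ell$th root with $\ell\ge 2$) and sums directly; your version also spells out the uniqueness of $N$, which the paper leaves implicit.
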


\begin{proof}
A character $\chi\in G_n^*$ of highest order has order $2^n$ and its kernel is a cyclic group of order $2^n$. 
Half of this kernel lies in a single node $N$ at level $n$; the rest of the kernel of $\chi$ lies in the nodes on the path from $N$ to the root $\{(0,0)\}.$
Therefore, for any node on this path, $\chi(M)=|M|$ and so $\chi$ agrees with $\chi_N$ on this path.
We will show that $\chi$ agrees with $\chi_N$ on all nodes in $T_n.$

Any child  $M$ of a node on this path, that is itself not on this path, consists of elements $g \in M$ such that  $2g$
  is in the kernel of $\chi$, yet $g$ is not.
Thus $\chi(g) = -1$ and so $\chi(M) = -|M| = \chi_N(M).$

Finally, those nodes $M$ that are not on the path from $N$ to the root, and that are also not children of nodes on this path, must then consist of group elements $g$ such that $\chi(g)$ is a primitive root of unity of order $2^\ell$ where $\ell \ge 2.$ 
Any such complex $2^\ell$th root of unity $\zeta$ has the property that $\zeta^{2^{\ell-1}+1}=-\zeta$ and so the sum of distinct odd powers of $\zeta$ is zero.  
Therefore, $\chi(M)=0 = \chi_N(M).$
\end{proof}

In Theorem \ref{ThmHighestOrderCharDescription} we defined $\chi_N$ for each $N\in T_n$ at level $n$ of $T_n$ and thereby described the value of the characters of highest order on $T_n$.
We now recursively define $\chi_A$ for every $A \in T_n$ as follows.
 If $N$ is a node in $T_n$ with corresponding character $\chi_N$ and if $A$ is a parent of $N$ then 
define $\chi_A$ by 
\[
\chi_A(M) = \begin{cases}
|M| & \up{if $|\chi_N(M)|=|M|$};\\
-|M| & \up{if $M$ is the child of $X$ for some $X\in T_n$ with $\chi_N(X)=-|M|$};\\
0 & \up{otherwise}.
\end{cases}
\]

\begin{thm}
\label{ThmLowerOrderCharDescription}
If $\chi\in G_n^*$ is a character {\em not} of highest order then $\chi$ is a square of a higher order character. 
If $\chi$ is the square of the character $\psi$ then on a node $M\in T_n$, $\chi(M)=|M|$ if and only if $\psi(M)=\pm |M|$.
Furthermore,
$\chi(M)=-|M|$ if and only if $M$ is a child of a node $X$ on which $\psi(X)=-|X|$
On all other nodes $M$, $\chi(M)=0.$
\end{thm}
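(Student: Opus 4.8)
The plan is to reduce the entire statement to the single pointwise identity $\chi(g)=\psi(2g)$, which holds because $\chi=\psi^2$ means $\chi(g)=\psi(g)^2=\psi(g+g)$. First I would dispatch the opening claim using $G_n^*\cong C_{2^n}\times C_{2^n}$: the squaring endomorphism $\psi\mapsto\psi^2$ acts as doubling of ``coordinates'', so its image is $2C_{2^n}\times 2C_{2^n}$, i.e.\ exactly the characters both of whose coordinates are even. A character has maximal order $2^n$ precisely when at least one coordinate is odd, hence a $\chi$ that is not of highest order lies in the image and is a square; any square root $\psi$ then has order $2\,\ord(\chi)$ (for the trivial character take $\psi$ of order $2$), which is strictly larger, so $\psi$ is of higher order.

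Next I would set up a ``doubling'' relation between the value of $\chi$ on a node and the value of $\psi$ on its parent. Fix a node $M$ at level $i+1$ with parent $P$, choose a base point $y\in M$ (of order $2^{i+1}$); then $2y\in P$ may be taken as a base point of $P$, and every element of $M$ is $uy$ for some odd $u$. The map $g\mapsto 2g$ is $U(2^n)$-equivariant and carries $M$ onto $P$ with fibers of common size $|M|/|P|$, so summing $\chi(g)=\psi(2g)$ over $g\in M$ yields $\chi(M)=\tfrac{|M|}{|P|}\,\psi(P)$. This is the key structural point: $\chi(M)$ is a positive multiple of $\psi(P)$, so all three cases of the theorem are governed entirely by whether $\psi(P)$ equals $+|P|$, $-|P|$, or $0$.

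To evaluate $\psi(P)$ I would compute it directly from the base-point root of unity $\omega_P=\psi(2y)$. For a node $P$ at level $\geq 1$ we have $P=\{u\cdot 2y:u\text{ odd}\}$, so $\psi(P)=\sum_{u\text{ odd}}\omega_P^{\,u}$, and a short geometric-series computation (using $\omega_P^{2^i}=1$) shows this equals $\pm|P|$ when $\omega_P=\pm1$ and $0$ when $\omega_P$ has order at least $4$ (the root contributes $\psi(\mathrm{root})=1=|\mathrm{root}|$ trivially). Thus $\psi(P)\in\{+|P|,-|P|,0\}$ for every node, the three cases are exhaustive, and $\chi(M)=\tfrac{|M|}{|P|}\psi(P)$ lands in $\{+|M|,-|M|,0\}$ accordingly. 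The $-|M|$ case is then immediate, since $\chi(M)=-|M|\iff\psi(P)=-|P|$ and $P$ is exactly the node $X$ of which $M$ is a child; and the $0$ case is the remaining ``otherwise''.

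The only case requiring a further observation is $\chi(M)=+|M|$, which must be matched with the condition $\psi(M)=\pm|M|$ phrased in terms of $M$ rather than $P$. Here I would exploit the compatibility $\omega_P=\omega_M^{\,2}$, which comes straight from the base-point choice $2y\in P$ for $y\in M$: applying the trichotomy of the previous paragraph to $M$ itself gives $\psi(M)=\pm|M|\iff\omega_M=\pm1\iff\omega_M^{\,2}=1\iff\omega_P=1\iff\psi(P)=+|P|\iff\chi(M)=+|M|$. I expect the doubling relation---in particular the claim that $g\mapsto 2g$ has uniform fibers over the parent node, which rests on the $U(2^n)$-orbit description of the nodes---to be the main technical point; once it and the base-point identity $\omega_P=\omega_M^{\,2}$ are in hand, the three cases fall out mechanically, and notably no appeal to Theorem~\ref{ThmHighestOrderCharDescription} or to the structure of lower-order characters is needed, so the argument is not circular.
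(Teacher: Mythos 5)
Your proof is correct, but it is organized around a different key lemma than the paper's. The paper argues element-wise through kernels: membership of $g$ in $\ker(\chi)$ forces $\psi(g)=\pm 1$, which yields the equivalence $\chi(M)=|M|\iff\psi(M)=\pm|M|$; next, $\psi$ sending a node's elements to $-1$ is equivalent to $\psi$ sending the elements of its children to $\pm i$, i.e., to $\chi$ sending those elements to $-1$, which yields the $-|M|$ case; on all remaining nodes the elements are sent by $\chi$ to roots of unity of order at least $4$, whose distinct odd powers cancel, giving $\chi(M)=0$. You replace this three-case, element-level analysis with a single structural identity, $\chi(M)=\tfrac{|M|}{|P|}\,\psi(P)$ where $P$ is the parent of $M$, proved via the pointwise relation $\chi(g)=\psi(2g)$ and the uniform-fiber property of the doubling map $M\to P$, combined with the trichotomy $\psi(P)\in\{|P|,-|P|,0\}$ governed by the base root of unity $\omega_P$; the reformulation of the $+|M|$ case in terms of $\psi(M)$ rather than $\psi(P)$ then falls out of $\omega_P=\omega_M^{2}$. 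Your organization buys uniformity---all three cases of the theorem, together with their exhaustiveness and mutual exclusivity, are read off one identity, and the correct handling of the root and level-one nodes is automatic---at the cost of re-deriving the odd-power cancellation that the paper simply reuses from its proof of Theorem \ref{ThmHighestOrderCharDescription}. The opening claims (that a character not of highest order is the square of a strictly higher-order character) are the same in substance in both arguments: the paper phrases it via the fact that elements of order $2^{n-j}$ in $G_n^*\cong G_n$ are $2^j$-th powers of elements of highest order, while you phrase it via coordinates and the image of the squaring endomorphism.
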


\begin{proof}
In the group $G_n = C_n\times C_n$ elements of order $2^{n-j}$ are of the form $g^{2^j}$ where $g$ is an element of highest order.
Since the dual group $G^*$ is isomorphic to $G$, then the elements of $G^*$ also have this property.

If $\chi=\psi^2$ then any element $g$ in the kernel of $\chi$ is either in the kernel of $\psi$ or is mapped by $\psi$ to $-1$. 
Thus if $g\in \ker(\chi)$ and $g\in A$ then either $\psi(A)=|M|$ or $\psi(A)=-|M|.$
If the character $\psi$ maps elements of $M$ to $-1$ then it maps elements of any child of $M$ to $\pm i$, a primitive fourth root of unity. 
This happens if and only if $\chi$ maps elements of the child of $M$ to $-1$ and thus maps $M$ to $-|M|$.

If $A$ is a node mapped to zero by $\psi$ and is not a child of a node mapped to $-|A|$ then the elements of $A$ are mapped to roots of unity who orders are at least 8. 
Therefore $\chi(A)=0.$
\end{proof}


Theorems \ref{ThmHighestOrderCharDescription} and \ref{ThmLowerOrderCharDescription} together show that for any character $\chi\in G_n^*$, there exists a unique $A\in T_n$ for which $\chi$ agrees with $\chi_A$ on $T_n$.

%
%

Let $T_n^* = \{\chi_N:N\in T_n\}$. We shall call the elements of $T_n^*$ the {\em characters} of $T_n$. We note that if $R$ is the root of $T_n$, then the unique character in $T_n^*$ that agrees with the trivial character $\chi_0 \in G_n^*$ is $\chi_R$.


\section{Canonical colorings and the PDS we enumerate}
\label{SecCanonicalColorings}

Let $n\geq2$ and $G_n = C_{2^n} \times C_{2^n}$, with orbit tree $T_n$. 

\subsection{Automorphisms}

\begin{lem}
\label{Automorphisms}
The orbit tree $T_n$ has an automorphism group of size $3 \cdot 2^{3(2^{n-1}-1)+1}$, while the automorphism group of $G_n$ has size $3\cdot 2^{4n-3}$. 
 Furthermore, the $3\cdot 2^{4n-3}$ automorphisms of $G_n$ induce a subgroup of order $3\cdot 2^{3n-2}$ within the automorphism group of $T_n$.
 \end{lem}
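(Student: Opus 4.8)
The plan is to treat the three assertions in turn, reducing each to a standard structural computation and handling the third as a kernel calculation for the natural map $\text{Aut}(G_n)\to\text{Aut}(T_n)$. First I would compute $|\text{Aut}(T_n)|$ directly from Definition \ref{DefOrbitTree}. The root has exactly three children, and by the construction every node at a level $i$ with $1\le i<n$ has exactly two children while the nodes at level $n$ are leaves; hence each of the three level-$1$ nodes roots a perfect binary tree of depth $n-1$, and these three subtrees are pairwise isomorphic. Since the automorphism group of a rooted tree depends only on its isomorphism type, $\text{Aut}(T_n)$ is the wreath product $\text{Aut}(B_{n-1})\wr S_3$, where $B_d$ denotes the perfect binary tree of depth $d$. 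Writing $a_d=|\text{Aut}(B_d)|$, the decomposition $\text{Aut}(B_d)=\text{Aut}(B_{d-1})\wr C_2$ gives $a_d=2a_{d-1}^2$ with $a_0=1$, so $a_d=2^{2^d-1}$. Therefore $|\text{Aut}(T_n)|=a_{n-1}^3\cdot 3! = 2^{3(2^{n-1}-1)}\cdot 6 = 3\cdot 2^{3(2^{n-1}-1)+1}$, as claimed.

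Second, since $G_n\cong(\Z/2^n\Z)^2$, its automorphism group is $GL_2(\Z/2^n\Z)$, which I would count via reduction modulo $2$: the map $GL_2(\Z/2^n\Z)\to GL_2(\Z/2\Z)$ is surjective, its kernel consists of the matrices congruent to the identity mod $2$, of which there are $(2^{n-1})^4$, and $|GL_2(\Z/2\Z)|=(2^2-1)(2^2-2)=6$. Hence $|\text{Aut}(G_n)|=6\cdot 2^{4(n-1)}=3\cdot 2^{4n-3}$.

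For the third assertion I would first observe that each $\phi\in\text{Aut}(G_n)$, being additive, commutes with every multiplier $g\mapsto u\cdot g$, so $\phi$ carries each $U(2^n)$-orbit to a $U(2^n)$-orbit and thus permutes the nodes of $T_n$; because $\phi$ preserves element orders (levels) and commutes with the doubling map $g\mapsto 2g$ (the parent relation), the induced permutation is a genuine tree automorphism. This gives a homomorphism $\Phi\colon\text{Aut}(G_n)\to\text{Aut}(T_n)$, and the claim is equivalent to $|\ker\Phi|=2^{n-1}$. The scalar automorphisms $g\mapsto u\cdot g$ with $u\in U(2^n)$ clearly lie in $\ker\Phi$ and form a subgroup isomorphic to $U(2^n)$, of order $2^{n-1}$. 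To show there are no others, suppose $\phi\in\ker\Phi$, so $\phi$ fixes each node setwise; writing elements of $G_n$ as pairs in $(\Z/2^n\Z)^2$, the level-$n$ orbits of the standard generators force $\phi(1,0)=(u_1,0)$, $\phi(0,1)=(0,u_2)$, and $\phi(1,1)=(u_3,u_3)$ for odd $u_1,u_2,u_3$. Additivity then gives $(u_1,u_2)=\phi(1,1)=(u_3,u_3)$, hence $u_1=u_2=u_3$ and $\phi$ is the scalar $u_1I$. Thus $\ker\Phi=\{u I:u\in U(2^n)\}$ and $|\text{image}(\Phi)|=(3\cdot 2^{4n-3})/2^{n-1}=3\cdot 2^{3n-2}$.

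The two order computations are routine; the step needing genuine care is the kernel identification in the third assertion. One must verify both that $\Phi$ really lands in $\text{Aut}(T_n)$ (not merely in the symmetric group on the node set) and that fixing every orbit setwise forces an automorphism to be a \emph{global} scalar rather than an orbitwise or coordinatewise one. The generator argument above resolves this, the key point being that the diagonal orbit of $(1,1)$ rigidly links the scalars acting on the two factors.
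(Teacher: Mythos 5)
Your proposal is correct, and in two of the three parts it takes a genuinely different route from the paper. For the tree count the arguments are essentially the same: your wreath-product recursion $\mathrm{Aut}(B_d)=\mathrm{Aut}(B_{d-1})\wr C_2$, $a_d=2a_{d-1}^2$, is a structural repackaging of the paper's direct count (permute the three level-$1$ nodes in $3!$ ways, then independently swap or fix each of the $3(2^{n-1}-1)$ sibling pairs). For $|\mathrm{Aut}(G_n)|$ the methods genuinely differ: the paper counts ordered generating pairs---there are $3\cdot 2^{2n-2}$ elements of order $2^n$, and for each such $x$ there are $2^{2n-1}$ elements of order $2^n$ generating a complement to $\langle x\rangle$---whereas you count $GL_2(\Z/2^n\Z)$ via the surjection onto $GL_2(\Z/2\Z)$ with kernel of size $2^{4(n-1)}$; both are routine and give $3\cdot 2^{4n-3}$, yours leaning on standard linear algebra, the paper's staying inside the group. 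The most substantive difference is in the third claim. The paper's proof only observes that the multiplier group, of order $2^{n-1}$, acts trivially on $T_n$ and then divides; as written this shows the kernel of $\mathrm{Aut}(G_n)\to\mathrm{Aut}(T_n)$ \emph{contains} the multipliers, hence only that the induced group has order \emph{at most} $3\cdot 2^{3n-2}$. Your argument supplies the missing reverse inclusion: an automorphism fixing every node setwise must send $(1,0)\mapsto(u_1,0)$, $(0,1)\mapsto(0,u_2)$, $(1,1)\mapsto(u_3,u_3)$ with the $u_i$ odd, and additivity forces $u_1=u_2=u_3$, so the kernel is exactly the scalar (multiplier) subgroup. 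You also verify that the induced permutation of nodes is a genuine rooted-tree automorphism, because group automorphisms commute with the doubling map that defines the parent relation---another point the paper asserts without proof. So your write-up is not only correct but somewhat more complete than the paper's own proof on the third assertion.
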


\begin{proof}
Given an orbit tree $T_n$, there are $3!$ ways to permute the nodes at level 1.
At each succeeding level one may always interchange the two children of a node and as there are $3(2^{n-1}-1)$ such child pairs, the total number of choices is $3! \cdot 2^{3(2^{n-1}-1)}= 3 \cdot 2^{3(2^{n-1}-1)+1}.$

That the order of the automorphism group of $G_n$ is $3\cdot 2^{4n-3}$   is given in \cite{AutomAbGps}. We give a quick proof: 
Automorphisms of $G_n$ may be identified by their actions on a pair of generators.
There are $3\cdot 2^{2n-2}$ elements of $G_n$ of order $2^n.$
Given one such element, $x$, there are $2^{2n-1}$ elements of order $2^n$ generating a subgroup that meets $\langle x \rangle$ only in the identity.
Thus there are $(3\cdot 2^{2n-2})(2^{2n-1})=3\cdot 2^{4n-3}$ automorphisms of $G_n$.

Finally, each group automorphism induces a tree automorphism.  However, the multiplier group, of order $2^{n-1}$, viewed as a subgroup of the group automorphisms, fixes every tree and so the full group of group automorphisms acts on the tree as a group of order $3\cdot 2^{4n-3}/2^{n-1}=3\cdot 2^{3n-2}.$

\end{proof}

The group of all tree automorphisms has order $3 \cdot 2^{3(2^{n-1}-1)+1}$.
The subgroup of tree automorphisms induced by group automorphisms has order $3\cdot 2^{3n-2}$ and so has  index
$2^{3(2^{n-1}-n)}$ in the tree automorphism group.
As mentioned in Section \ref{SecIntroSummary}, the traditional method of using group automorphisms to define PDS equivalence is inadequate in view of the dramatic differences between the sizes of these two automorphism groups.

\subsection{Canonical colorings} A {\em coloring} of a set $X$ is simply a function with domain $X$. A 0-1 coloring is a coloring with codomain $\{0,1\}$. We identify subsets $S\subseteq T_n$ with 0-1 colorings of $T_n$ in the obvious way: Given $S\subseteq T_n$, for $N\in T_n$, $N\in S$ if and only if $N$ is colored 1. Every PDS in $G_n$ can thus be thought of as a 0-1 coloring of $T_n$.

For a coloring $C$ of $T_n$ and a permutation $\gamma$ of $T_n$, let $\gamma(C)$ be the coloring of $T_n$ defined by $\gamma(C)(N) = C(\gamma^{-1}(N))$ for all $N\in T_n$. 

\begin{defn}
The {\em tree automorphism class} of a coloring $C$ of $T_n$ is $$\{\gamma(C):\gamma \up{ is a tree automorphism of the rooted tree }T_n\}.$$ 
\end{defn}

We now describe a distinguished representative---the {\em canonical} representative---of each such automorphism class. 
Intuitively, a 0-1 coloring $C$ of $T_n$ will be canonical if, among the colorings in its tree automorphism class, the nodes $T_n(1,\ell)$ which are colored 1 have $\ell$ minimized, and given that, the nodes $T_n(2,\ell)$ which are colored 1 have $\ell$ minimized, and so on.

To specify precisely what we mean by canonical, first for $N\in T_n$ let $(L(N),\sqsubset)$ be the linear order whose elements are the elements of the subtree of $T_n$ consisting of $N$ and its descendants, where $\sqsubset$ is the lexicographic ordering on the elements of $L(N)$ according to their ordered-pair indices (so for $T_n(i,j),T_n(i',j')\in L(N)$, we have $T_n(i,j) \sqsubset T_n(i',j')$ whenever $i<i'$, or $i=i'$ and $j<j'$). Let $L(N)_\ell$ denote the $\ell$th element (according to $\sqsubset$) of $L(N)$. If $C$ is a coloring of $T_n$, for $N,\bar N\in T_n$ and $j\in \N$ we write $C(L(N)) \equiv_j C(L(\bar N))$ if $C(L(N)_\ell) = C(L(\bar N)_\ell)$ for all $\ell\in \{1 ,\ldots, j\}$.

{\em Siblings} are distinct nodes with the same parent. For $N=T_n(i,j)$, a sibling $T_n(i,\ell)$ of $N$ is {\em younger} if $\ell<j$. For $N\in T_n$, let $P_-(N)$ be the set of nodes of $T_n$ on the path from $N$ to the root, including $N$ but excluding the root. For a node $N=T_n(n,\ell)$ at level $n$ and $M\in P_-(N)$, say $N = L(M)_j$, we say $N$ is {\em $M$-canonical in $C$} if for each younger sibling $\bar M$ of $M$ the following holds:
\[
\begin{cases}
C(N) \leq C(\bar M) & \up{if }M=N \up{ (i.e., if $j=1$)};\\
\up{If } C(L(M))\equiv_{j-1}C(L(\bar M)) \up{ then } C(N) \leq C(L(\bar M)_j) & \up{otherwise}.
\end{cases}
\]


We specify that both 0-1 colorings of $T_0$ are canonical, and the six canonical colorings of $T_1$ are the 0-1 colorings for which $T_1(1,j)=1 \implies T_1(1,\ell)=1$ for all $\ell<j$. Given a coloring $C$ of $T_n$, $\pi(C)$ denotes the restriction of $C$ to $T_{n-1}$.

\begin{defn}
\label{defCanonical}
Let $n\geq 2$. A coloring $C$ of $T_n$ is {\em canonical} if $C$ is a 0-1 coloring, $\pi(C)$ is a canonical coloring of $T_{n-1}$, and for each node $N=T_n(n,j)$ at level $n$ we have that $N$ is $M$-canonical in $C$ for all $M\in P_-(N)$.
\end{defn}

\subsection{The PDS we enumerate}To obtain a complete description of the PDS in $G_n$, we argue that it suffices to enumerate only the canonical PDS of even size. (By the {\em size} of $S\subseteq T_n$, we mean the size of $S$ thought of as a subset of $G_n$.) 

To explain why, first, if $S\subseteq T_n$ and $\gamma$ is a tree automorphism of $T_n$, then the multisets of eigenvalues of $S$ and of $\gamma(S)$ are identical. Hence by Theorem \ref{ThmPDSEig} we only need to enumerate one representative of each tree automorphism class of each PDS $S\subseteq T_n$. We choose to enumerate only canonical PDS. Second, as mentioned earlier, by complementation we only enumerate the PDS of a single parity, as $S$ is a PDS of even size if and only if $S^c$ is a PDS of odd size. Hence we choose to enumerate only the canonical PDS of even size.

We note that by Theorem \ref{ThmPDSEig}, if $S\subseteq T_n$ is a PDS of the type we aim to enumerate, then every eigenvalue of $S$ is even.

\section{LM partial colorings}
\label{SecLM}

Let $n\geq 2$ and $G_n = C_{2^n} \times C_{2^n}$, with orbit tree $T_n$. We continue to identify subsets of $T_n$ with 0-1 colorings of $T_n$. In this section we recall a result from \cite{LeifmanMuzychuk95}, which will give us a place to start our search for the canonical PDS of even size in $G_n$. Throughout this section, $X$ will denote a tuple of integers $X = (x_0, x_1 ,\ldots, x_{n-1})$, with $x_0\in \{0,1,2,3\}$ and $x_i\in \{0,1\}$ for $i>0$.

\begin{defn}
The {\em canonical homogenous set} $S_X\subseteq T_n$ is the 0-1 coloring defined by the following construction. The youngest $x_0$ children of the root are colored 1. Then, for $i$ from 1 to $n-1$, for each node $N$ of row $i$, the youngest $S_X(N) + x_i$ children of $N$ are colored 1. (Note $S_X(N)\in \{0,1\}$.) All other nodes, including the root, are colored 0.
\end{defn}

Given $X$, the {\em anti-canonical homogenous set $S'_X \subseteq T_n$} is defined in the same way as $S_X$, but replacing ``youngest" with ``oldest" in the definition. We emphasize that a canonical homogenous set is indeed a canonical coloring, and that a homogenous set is typically {\em not} a PDS. 

For $S\subseteq T_n$, let $$S|_i = \{N\in S: N\up{ is in row $i$ of $T_n$} \}.$$ A {\em block} in $T_n$ is a subset of $T_n$ of the form $\children(N)$ for some $N\in T_n$. 

We define the {\em LM partial colorings} of $T_n$ as follows. (LM stands for Leifman-Muzychuk.)

\begin{defn}
\label{DefLM}
The LM partial colorings of $T_n$ fall into three (overlapping) classes:
\begin{enumerate}
	\item \label{LMtype1} Given $X=(x_0, x_1 ,\ldots, x_{n-1})$ with $x_0\in \{0,2\}$, the {\em positive  LM partial coloring} for $X$, denoted $\LM^+_X$, is given by
	\[
	\LM^+_X = S_X|_1 \cup S_X|_n.
	\]
	\item \label{LMtype2} Given $X=(x_0, x_1 ,\ldots, x_{n-1})$ with $x_0\in \{0,2\}$, the {\em negative  LM partial coloring} for $X$, denoted $\LM^-_X$, is given by
	\[
	\LM^-_X = S_X|_1 \cup S_X|n \cup \left\{D: D \up{ is a block in row $n$ of $T_n$ and $D\cap S_X = \varnothing$} \right\}.
	\]
	\item \label{LMtype3} Given $X=(x_0, x_1 ,\ldots, x_{n-1})$ with $x_0\in \{1,3\}$, the {\em negative  LM partial coloring} for $X$, denoted $\LM^-_X$, is given by
	\[
	\LM^-_X = \left(S'_X\right)^c|_1 \cup \left(S'_X \cup \left\{D: D \up{ is a block in row $n$ of $T_n$ and $D\cap S'_X = \varnothing$} \right\}\right)^c|_n.
	\]
\end{enumerate}
\end{defn}

For example, the canonical homogenous set $S_{(2,1,0)}$ in $C_{2^3} \times C_{2^3}$ is given in Figure \ref{canonical1}.
The partial coloring $LM^{+}_{(2,1,0)}$ is obtained by zeroing out the six entries in row 2.
The partial coloring $LM^{-}_{(2,1,0)}$ is the same, except the bottom right pair of zeros is replaced with ones.

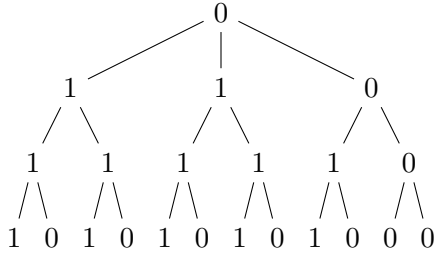
\begin{figure}
\caption{The canonical homogenous set $S_{(2,1,0)}$}%
\label{canonical1}%
\begin{tikzpicture}
	\node (00) at (0,0) {$0$};
	\node (10) at (-2,-1) {$1$};
	\node (11) at (0,-1) {$1$};
	\node (12) at (2,-1) {$0$};
	\node (20) at (-2.5,-2) {$1$};
	\node (21) at (-1.5,-2) {$1$};
	\node (22) at (-0.5,-2) {$1$};
	\node (23) at (0.5,-2) {$1$};
	\node (24) at (1.5,-2) {$1$};
	\node (25) at (2.5,-2) {$0$};
	\node (30) at (-2.75,-3) {1};
	\node (31) at (-2.25,-3) {0};
	\node (32) at (-1.75,-3) {1};
	\node (33) at (-1.25,-3) {0};
	\node (34) at (-0.75,-3) {1};
	\node (35) at (-0.25,-3) {0};
	\node (36) at (0.25,-3) {1};
	\node (37) at (0.75,-3) {0};
	\node (38) at (1.25,-3) {1};
	\node (39) at (1.75,-3) {0};
	\node (310) at (2.25,-3) {0};
	\node (311) at (2.75,-3) {0};
	\draw (00)--(10);
	\draw (00)--(11);
	\draw (00)--(12);
	\draw (10)--(20);
	\draw (10)--(21);
	\draw (11)--(22);
	\draw (11)--(23);
	\draw (12)--(24);
	\draw (12)--(25);
	\draw (20)--(30);
	\draw (20)--(31);
	\draw (21)--(32);
	\draw (21)--(33);
	\draw (22)--(34);
	\draw (22)--(35);
	\draw (23)--(36);
	\draw (23)--(37);
	\draw (24)--(38);
	\draw (24)--(39);
	\draw (25)--(310);
	\draw (25)--(311);
\end{tikzpicture}
\end{figure}

Every LM partial coloring of $T_n$ is actually of type (\ref{LMtype2}) or (\ref{LMtype3}), and those of type (\ref{LMtype2}) and (\ref{LMtype3}) overlap in exactly one coloring; hence there are $2^{n+1}-1$ distinct LM partial colorings of $T_n$. We have mentioned those of type (\ref{LMtype1}) only to state Theorem \ref{ThmLM} below precisely.

Using the observations that if $S\subseteq T_n$ is a canonical PDS then $S^h$ is a canonical coloring (see \cite{LeifmanMuzychuk95} for the meaning of the notation $S^h$), and that the complement of a canonical homogenous set is an anti-canonical homogenous set and vice versa, we translate Propositions 6.3 and 6.4 of \cite{LeifmanMuzychuk95} as follows:

\begin{thm}
\label{ThmLM}
Let $S\subseteq T_n$ be a canonical PDS of even size. Then $S$ is either a positive or negative Latin square PDS. Furthermore: 
\begin{enumerate}
	\item If $S$ is a positive Latin square PDS, then $S|_0 \cup S|_1\cup S|_n = \LM^+_X$ for some tuple $X=(x_0, x_1 ,\ldots, x_{n-1})$, with $x_0\in \{0,2\}$.
	\item If $S$ is a negative Latin square PDS, then $S|_0 \cup S|_1\cup S|_n = \LM^-_X$ for some tuple $X$.
\end{enumerate}
\end{thm}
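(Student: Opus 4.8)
The plan is to reduce the statement to Propositions 6.3 and 6.4 of \cite{LeifmanMuzychuk95} by translating their conclusions about character values into the coloring language of $T_n$. First I would record precisely what those propositions assert: that a PDS of the relevant parity in $C_{2^n}\times C_{2^n}$ has Latin-square parameters, and that the portion of $S$ supported on the elements of order $2$ and of order $2^n$ (equivalently, rows $1$ and $n$ of $T_n$) is forced to be ``homogeneous,'' in the sense captured here by the homogeneous sets $S_X$ and $S'_X$. I would also recall the definition of $S^h$, the homogenization of a coloring, and invoke the two bridging facts already flagged in the text: if $S$ is canonical then $S^h$ is a canonical coloring, and complementation exchanges canonical homogeneous sets with anti-canonical ones.

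Part (a), that $S$ is a positive or negative Latin square PDS, I would obtain from the eigenvalue data of Theorem \ref{ThmPDSEig} together with LM's classification. Writing $M=2^n$, Theorem \ref{ThmPDSEig} gives nontrivial eigenvalues $r>0$ and $s=r-M<0$; the remaining content is that $k$ must equal either $(M-r)(M-1)$ or $r(M+1)$, i.e.\ that the parameters are exactly those of $PL_{-s}(M)$ or of $NL_{r}(M)$. This is precisely the parameter half of LM's result, and the two cases are mutually exclusive because $(M-r)(M-1)=r(M+1)$ forces $r=(M-1)/2\notin\Z$. The label ``positive'' versus ``negative'' is then determined by which formula $k$ satisfies.

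Part (b) is the structural core and is where I expect the real work to lie. Having fixed the parameter type, the content of LM's propositions is that rows $0$, $1$, and $n$ of $S$ are governed by its homogenization: the root is uncolored since $S$ is regular, and the order-$2$ and order-$2^n$ parts of $S$ agree with those of $S^h$. Since $S$ is canonical, $S^h$ is a canonical homogeneous set, hence equals $S_X$ for a unique tuple $X$ in the positive case; reading off rows $1$ and $n$ then gives $S|_0\cup S|_1\cup S|_n=S_X|_1\cup S_X|_n=\LM^+_X$, which is conclusion (1). In the negative case the same analysis applies to the homogenization, but LM's description of the order-$2^n$ part augments $S_X|_n$ by the blocks of row-$n$ nodes disjoint from it; after using that complementation turns a canonical homogeneous set into an anti-canonical one, this matches verbatim the set-builder formula defining $\LM^-_X$ in Definition \ref{DefLM}, giving conclusion (2).

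The main obstacle is the bookkeeping in part (b) for the negative case: verifying that LM's characterization of the order-$2^n$ part of a negative Latin square PDS is the identical set of nodes as $\left(S'_X\cup\{D:D\text{ a block in row }n,\ D\cap S'_X=\varnothing\}\right)^c|_n$, and simultaneously that the order-$2$ part matches $\left(S'_X\right)^c|_1$. This requires tracking which blocks in row $n$ are empty versus full, understanding how complementation interacts with the block-completion, and checking that the even-size hypothesis is exactly what makes the row-$1$ count come out even in both the type (\ref{LMtype2}) ($x_0\in\{0,2\}$) and type (\ref{LMtype3}) ($x_0\in\{1,3\}$) presentations. I would finish by confirming that every even-size canonical PDS lands in one and only one of the two cases, so that the two conclusions are exhaustive.
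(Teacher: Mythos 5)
Your proposal is correct and follows essentially the same route as the paper: the paper's entire justification is that the theorem is a translation of Propositions 6.3 and 6.4 of \cite{LeifmanMuzychuk95}, using exactly the two bridging facts you invoke (that $S^h$ is a canonical coloring when $S$ is a canonical PDS, and that complementation exchanges canonical and anti-canonical homogeneous sets). Your additional eigenvalue computation for the positive/negative Latin square dichotomy in part (a) is a correct fleshing-out of what the paper leaves implicit in the citation.
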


Combined with the discussion following Definition \ref{DefLM}, Theorem \ref{ThmLM} says that there are only $2^{n+1}-1$ possible color combinations for rows $0$, $1$, and $n$ of a canonical PDS of even size in $G_n$. Nevertheless we note that extensions of LM partial colorings to canonical PDS of even size are fairly rare. For instance, for $n=5$ there are 63 unique LM partial colorings of $T_n$, and it turns out that only 7 of them extend to yield the 12 canonical PDS of even size in $G_5$.


\section{Our algorithm}
\label{SecAlgorithm}

Let $n\geq 2$ and $G_n = C_{2^n} \times C_{2^n}$, with orbit tree $T_n$. Our goal is to find all (regular nontrivial) canonical PDS $S\subseteq T_n$ of even size. By Theorem \ref{ThmPDSEig} this amounts to finding all canonical 0-1 colorings $C$ of $T_n$ with three distinct even eigenvalues $k, r, s$, such that $k, r>0, s<0, s=r-2^n, k \equiv r \equiv s \pmod{2^n}$, and $\chi_N(C)\in\{r, s\}$ for all $N\in T_n\setminus T_n(0,0)$ (and hence $r\in R = \{2, 4,\ldots, 2^{n}-2\}$). While it is fairly straightforward to describe a search tree algorithm that generates the canonical colorings of $T_n$, it would be inefficient to generate all of the canonical colorings of $T_n$ and then check which ones meet the additional conditions we are looking for. Instead we will traverse a search forest that takes all of our conditions into account simultaneously, intelligently eliminating large branches which will never lead to a PDS or which will never lead to a canonical coloring. 

\subsection{Outline of the algorithm}

The starting point for our search is the collection of LM partial colorings of $T_n$. 

\begin{defn}
Let $C$ be an LM partial coloring of $T_n$. If $S\subseteq T_n$, we say $S$ {\em extends} $C$ if $S|_0 = C|_0$,  $S|_1 = C|_1$, and $S|_n = C|_n$.
\end{defn}

Given an LM partial coloring $C$, we will begin by computing a restricted set $R'\subseteq R$ such that any PDS extending $C$ must have its positive nontrivial eigenvalue $r\in R'$. We call $R'$ the set of {\em feasible nontrivial positive eigenvalues} for $C$. For each $C$ we will compute $R'$, and then for each $r\in R'$ we will perform a tree search to output every canonical PDS $S$ extending $C$ whose positive nontrivial eigenvalue is $r$. That is, the outline of our algorithm is as follows:

\begin{lstlisting}[
  mathescape,
  columns=fullflexible,
  basicstyle=\fontfamily{lmvtt}\selectfont,
]
(1) Generate the LM partial colorings of $T_n$.
Then for each LM partial coloring $C$
{
  (2) Compute the feasible nontrivial positive eigenvalues $R'$ for $C$.
  Then for each $r\in R$
  {
    (3) Compute and output each nontrivial canonical PDS $S$ extending $C$ whose positive
        nontrivial eigenvalue is $r$.
  }
}	
\end{lstlisting}
The computations necessary to carry out step (1) were described in Section \ref{SecLM}. We explain the computations for steps (2) and (3) in Sections \ref{SecFeasibleEigs} and \ref{SecTreeSearch}, respectively.

\subsection{Feasible eigenvalues}
\label{SecFeasibleEigs}

Let $C$ be an LM partial coloring of $T_n$. If $S$ is a PDS extending $C$, then the positive nontrivial eigenvalue $r$ of $S$ lies in $R = \{2, 4,\ldots, 2^{n}-2\}.$ 
In this section we describe how to find a subset $R'$ of $R$, which we call the {\em feasible nontrivial positive eigenvalues} for $C$, which has the property that if $S$ is any PDS extending $C$, then the eigenvalue $r$ of $S$ lies in $R'$. 

We define a new coloring $C'$ as follows: For any node $N\in T_n$,
\[
C'(N) = 
\begin{cases}
C(N) &\up{if $N$ is in row $0$, $1$, or $n$ of $T_n$};\\
x_N &\up{otherwise},
\end{cases}
\]
where $x_N$ is a variable with range $\{0,1\}$. 
Hence the extensions of $C$ are exactly the sets obtainable from $C'$ by assigning values to every variable $x_N$, as $N$ ranges over the nodes in rows $2 ,\ldots, n-1$ of $T_n$.

Let $r\in R$. 
We shall use what is essentially a Gaussian elimination method to decide whether or not $r\in R'$, with the twist that some of the entries of the augmented matrix we are about to describe are sets instead of scalars. Let $\chi\in G_n^*$ be a nontrivial character.

\begin{prop}Suppose $\chi$ is of highest order; say $\chi = \chi_N$ for some node $N$ at row $n$ of $T_n$. Let $\bar{N}$ be the sibling of $N$, and suppose further that $C(N) \neq C(\bar{N})$. If there is an assignment of the variables $x_N$ in $C'$ that produces a PDS $S$ with positive nontrivial eigenvalue $r$, then:
\begin{itemize}
	\item If $C(N)=1$, then $\chi_N(S) = r$.
	\item If $C(N)=0$, then $\chi_N(S) = r-2^n$.
\end{itemize}
\end{prop}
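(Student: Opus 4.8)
The plan is to compare $\chi_N$ with the character $\chi_{\bar N}$ associated to the sibling $\bar N$. Since $\bar N$ is also a node at level $n$, it too determines (the restriction to $T_n$ of) a character of highest order via Theorem~\ref{ThmHighestOrderCharDescription}, so $\chi_{\bar N}(S)$ is again a nontrivial eigenvalue of $S$. The key observation is that $\chi_N$ and $\chi_{\bar N}$ agree on every node of $T_n$ except $N$ and $\bar N$, and that on these two nodes their values are exactly negatives of one another.

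First I would pin down the relationship between the paths defining $\chi_N$ and $\chi_{\bar N}$. Writing $A$ for the common parent of $N$ and $\bar N$, the path $P^+$ from $N$ to the root and the path from $\bar N$ to the root coincide from $A$ upward, differing only in their level-$n$ node. Consequently $\bar N$ is the off-path child of $A\in P^+$, so $\bar N$ lies in the $P^-$ set of $N$, and symmetrically $N$ lies in the $P^-$ set of $\bar N$. Using the explicit description in Theorem~\ref{ThmHighestOrderCharDescription}, this gives $\chi_N(N) = |N| = 2^{n-1}$ and $\chi_N(\bar N) = -2^{n-1}$, while $\chi_{\bar N}(N) = -2^{n-1}$ and $\chi_{\bar N}(\bar N) = 2^{n-1}$, with $\chi_N(M) = \chi_{\bar N}(M)$ for every other $M$.

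Next, since $S$ extends $C$ we have $[N\in S] = C(N)$ and $[\bar N\in S] = C(\bar N)$, so subtracting the two character sums and cancelling all contributions except those at $N$ and $\bar N$ yields
\[
\chi_N(S) - \chi_{\bar N}(S) = 2^n\bigl(C(N) - C(\bar N)\bigr).
\]
By Theorem~\ref{ThmPDSEig}, $\chi_N(S),\chi_{\bar N}(S)\in\{r,s\}$ with $s = r - 2^n$, i.e.\ with $r - s = 2^n$. I would finish with a short case analysis under the hypothesis $C(N)\neq C(\bar N)$: if $C(N)=1$ and $C(\bar N)=0$ then $\chi_N(S) = \chi_{\bar N}(S) + 2^n$, and the only pair of elements of $\{r,s\}$ differing by $+2^n = r-s$ is $\chi_N(S)=r$, $\chi_{\bar N}(S)=s$; the case $C(N)=0$, $C(\bar N)=1$ is symmetric and forces $\chi_N(S)=s=r-2^n$.

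I expect the conceptual crux to be the decision to compare $\chi_N$ with its sibling's character $\chi_{\bar N}$ rather than attempting to evaluate $\chi_N(S)$ directly; once this is in place the sum manipulation is routine. The one point requiring care is the applicability of Theorem~\ref{ThmPDSEig} to $\chi_{\bar N}$, i.e.\ that $\chi_{\bar N}$ really is the restriction to $T_n$ of a genuine highest-order character of $G_n^*$. This holds because, by a counting argument using Lemma~\ref{LemGalois}, the functionals $\chi_M$ with $M$ ranging over the level-$n$ nodes are exactly the restrictions of the highest-order characters. The hypothesis $C(N)\neq C(\bar N)$ is then precisely what forces the difference $\chi_N(S)-\chi_{\bar N}(S)$ to equal $\pm 2^n = \pm(r-s)$, breaking the tie between $r$ and $s$.
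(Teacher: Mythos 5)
Your proof is correct and follows essentially the same route as the paper's: both compare $\chi_N$ with the sibling's character $\chi_{\bar N}$, use the fact that they agree off $\{N,\bar N\}$ to get $\chi_N(S)-\chi_{\bar N}(S)=\pm 2^n$, and then invoke Theorem \ref{ThmPDSEig} to force $\chi_N(S)=r$ or $r-2^n$ accordingly. Your write-up is somewhat more careful than the paper's—explicitly verifying the $P^+/P^-$ structure and the fact that $\chi_{\bar N}$ is genuinely realized by a highest-order character—but the core argument is identical.
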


\begin{proof}Suppose there is an assignment of the variables $x_N$ in $C'$ that produces a PDS $S$ with positive nontrivial eigenvalue $r$. Then $\chi(S)\in \{r,r-2^n\}$ for any nontrivial $\chi\in G_n^*$. 

Suppose $C(N) = 1$. Then $C({\bar{N}}) = 0$. Since $\chi_N$ and $\chi_{\bar{N}}$ agree everywhere except on $N$ and $\bar{N}$, by the definition of $\chi_N$ we have $\chi_N(S) - \chi_{\bar{N}}(S) = 2^n$. Thus $\chi_N(S) > \chi_{\bar{N}}(S)$, so $\chi_N(S) = r$.

The case where $C(N) = 0$ is similar. 
\end{proof}


We now define a statement $E_\chi$ that must be satisfied for there to be a PDS extending $C$ with positive nontrivial eigenvalue $r$. If $\chi$ is a character of highest order, with $N$ the node at row $n$ of $T_n$ such that $\chi = \chi_N$ and $\bar{N}$ the sibling of $N$, and $C(N)\neq C(\bar{N})$, then in the case that $C(N)=1$ we define the statement $E_{\chi_N}$ given by
\[
E_{\chi_N} = ``\chi_N(C') \in \{r\},"
\]
while in the case that $C(N)=0$ we define the statement
\[
E_{\chi_N} = ``\chi_N(C') \in \{r-2^n\}."
\]
On the other hand, if $C(N) = C(\bar{N})$ or if $\chi$ is a nontrivial character of lower order, we define the statement
\[
E_{\chi} = ``\chi(C') \in \{r, r-2^n\}."
\]
Note that for any $\chi$, $\chi(C')$ is a linear expression with variables $\{x_N\}$. We therefore have a system of ``linear statements" $\{E_\chi\}$, as $\chi$ ranges over the nontrivial characters of $G_n$. The system of statements $\{E_\chi\}$ is {\em consistent} if there exists an assignment to the variables $\{x_N\}$ that makes every statement in the system true. By Theorem \ref{ThmPDSEig}, there exists a PDS extending $C$ having positive nontrivial eigenvalue $r$ only if the system $\{E_\chi\}$ is consistent.

We now build an augmented matrix $[A|v]$ from the statements $E_\chi$ in the same way one usually builds the augmented matrix of a system of linear equations. We index the columns of $A$ by the variables $x_N$ and the rows of $A$ and $v$ by the nontrivial characters $\chi$. The row of $A$ for $\chi$ has the coefficient of $x_N$ in $\chi(C')$ in the $x_N$ column, and the entry in the $\chi$ row of $v$ is the set on the right hand side of the statement $E_\chi$. While the normal row operations used in Gaussian elimination can be performed on the matrix $[A|v]$ (with the usual operations for multiplication of a set by a scalar: $cM=\{cm:m\in M\}$ and for the addition of sets: $M+N=\{m+n:m\in M, n\in N\}$), the row operation that replaces one row with the sum of that row and a scalar multiple of another row is {\em not} reversible, and hence Gaussian elimination on $[A|v]$ alone cannot solve or even prove consistency of our associated system of statements. However, it can reveal inconsistency---if $[A|v]$ represents a consistent system, then so does the matrix $[\rref(A)|w]$ obtained by Gaussian elimination---and this is what we use to determine whether or not $r$ is feasible. If there is a row of $\rref(A)$ which is all zeroes and the corresponding entry of $w$ does not contain a $0$, then $[\rref(A)|w]$ does not represent a consistent system, and therefore neither does $[A|v]$, and hence we deem $r$ to be {\em non-feasible}, and $r\notin R'$. Otherwise $r$ is {\em feasible}, and $r\in R'$.

We note that typically there will be several feasible values of $r$ for which there are actually no canonical PDS extensions of $C$ having positive nontrivial eigenvalue $r$. However $|R'|$ will be much smaller than $|R|$, and the next step of our algorithm takes $C$ and a positive even integer $r$ and performs a relatively expensive tree search to find all canonical PDS extending $C$ with positive nontrivial eigenvalue $r$. In searching for all canonical PDS extending $C$, for larger values of $n$ it is faster to eliminate the non-feasible $r$ values by performing the computation in this section rather than eliminating them by subjecting them to the tree search algorithm in the next section.

\subsection{A tree search driven by modular equations}
\label{SecTreeSearch}

Let $C$ be an LM partial coloring and $r\in\{2, 4, \ldots, 2^{n}-2\}$. In this section we describe a tree search method that finds all canonical PDS extending $C$ having positive nontrivial eigenvalue $r$. We begin by forming $C'$ as we did in Section \ref{SecFeasibleEigs} by setting, for $N\in T_n$,
\[
C'(N) = 
\begin{cases}
C(N) &\up{if $N$ is in row $0$, $1$, or $n$ of $T_n$};\\
x_N &\up{otherwise},
\end{cases}
\]
where $x_N$ is a variable having range $\{0,1\}$. Hence any canonical PDS extending $C$ corresponds to some assignment of values to the variables $x_N$ in the definition of $C'$. $C'$ is the root of our search tree. In general, a node $Q$ of our search tree will be a modification of its parent $P$, with the properties that $\forall M\in T_n$, $Q(M)$ is either $0$, $1$, or a linear expression containing one of the variables $x_N$, and that if $P(M)$ has been assigned a value of either 0 or 1 then $Q(M)=P(M)$. We will say that $S$ is a PDS {\em extending} $Q$ if $S$ is a PDS and may be obtained by assigning values to the variables $x_N$ in $Q$. We also say $Q$ is {\em definitively non-canonical} if there exists any $N\in T_n$ at any level $i$ for which $N$ is not $M$-canonical in $\pi^{n-i}(Q)$ for some $M\in P_{-}(N)$ (where $\pi^{0}$ is the identity map and $\pi^{n-i}$ is the $n-i$ fold composition of $\pi$ with itself). Definitively non-canonical nodes $Q$ cannot be extended to canonical colorings and so will have no children.

To form the children of a node $Q$ of our search tree we use the following four steps:
\begin{enumerate}
	\item If there is a PDS $S$ extending $Q$ having positive nontrivial eigenvalue $r$ then 
\[
\chi(Q) \equiv \chi(S) \equiv r\pmod{2^n}
\]
and hence also
\begin{align*}
\chi(Q) &\equiv r\pmod{2^{n-1}}\\
\chi(Q) &\equiv r\pmod{2^{n-2}}\\
&\vdots\\
\chi(Q) &\equiv r\pmod{4}
\end{align*}
for any $\chi\in T_n^*$. If any expression $\chi(Q)$ contains exactly one variable $x_N$ after being reduced mod ${2^k}$ for any $k$ we test whether or not the statement $\chi(Q) \equiv r \pmod{2^k}$ is true after making the substitutions $x_N=0$ and $x_N=1$. If exactly one of these substitutions $x_N=i$ gives a true statement then we set the value of the variable $x_N$ to $i$. If neither statement is true then $Q$ cannot be extended to a PDS with nontrivial eigenvalue $r$, so $Q$ has no children.

Similarly, if any expression $\chi(Q)$ contains exactly two variables $x_M, x_N$ after being reduced mod ${2^k}$, we test whether or not the statement $\chi(Q) \equiv r \pmod{2^k}$ is true after making all four substitutions $x_M=i, x_N=j$ for $i,j\in \{0,1\}$. 
If we find that the statement is true exactly in the two cases that $x_M=x_N$ we set $x_N = x_M$ for $Q$ and hence also all of $Q$'s descendents. If we find that the statement is true exactly in the two cases that $x_M\neq x_N$ we set $x_N\neq x_M$ (that is, $x_N = 1-x_M$) for $Q$ and hence also all of $Q$'s descendents, and further we check both assignments $x_M=0$ and $x_M=1$ to see if either would cause $Q$ to be definitively non-canonical: If exactly one makes $Q$ definitively non-canonical we update $x_M$ and $x_N$ to $1$ or $0$ accordingly so that $Q$ is not definitively non-canonical, and if both make $Q$ definitively non-canonical then $Q$ has no children. We note that testing the $Q(L(M))\equiv_{j-1}Q(L(\bar M))$ condition for a node to be $M$-canonical in $Q$ can be done even if the values in $Q(L(M))$ and $Q(L(\bar M))$ are variable expressions.
Finally if the statement is true in none of the four cases then $Q$ cannot be extended to a PDS, so $Q$ has no children. 

Out of any clique of variables which are equal or unequal in pairs where no variable has yet been assigned a value, there is only one independent variable, which we take to be the earliest variable from the clique in the linear order $L(T_n(0,0))$.

This step is looped until no further changes to $Q$ are possible. We note that this step typically reduces the number of independent variables drastically when $Q=C'$, and continues to be useful for reducing the number of independent variables for deeper nodes $Q$ in our search tree.

	\item We apply characters to the updated version of $Q$ from the previous step. If for any nontrivial character $\chi$ we find that $\chi(Q)$ is a constant and $\chi(Q)\notin\{r,r-2^n\}$ then $Q$ cannot be extended to a PDS, so $Q$ has no children. We note that $\chi(Q)$ can be a constant even in the case that there are nodes $N\in T_n$ in the support of $\chi$ for which $Q(N)$ is a variable-containing expression, as variable cancellation can occur in the evaluation of $\chi(Q)$. (The {\em support} of $\chi$ is $\{N\in T_n:\chi(N)\neq 0\}$.)

	\item If $Q$ is definitively non-canonical $Q$ will not have any children.
	
	\item If at this point $Q$ has not been designated as having no children and $Q$ has at least one independent variable in its range, $Q$ will have the two children $Q_0$ and $Q_1$, where $Q_i$ is obtained by taking the first independent variable in the range of $Q$ and assigning it the value $i$.
\end{enumerate}

The output of our algorithm are the leaves of our search tree (that is, the colorings $Q$ in our search tree with no unassigned variable in the range of $Q$) for which $Q$ is canonical, $\chi(Q)\in \{r,r-2^n\}$ for all nontrivial $\chi\in T_n^*$, and also have the property that $Q$ has exactly three eigenvalues.
Since our algorithm only cuts branches in our search tree that cannot be extended to canonical colorings having nontrivial eigenvalues in $\{r,r-2^n\}$, the output of our algorithm is all the canonical PDS extending $C$ having positive nontrivial eigenvalue $r$.

\section{Examples}
\label{SecExamples}

\subsection{An example in $C_8 \times C_8$}
\label{SecC8ex}

Let $C'$ be the coloring of $T_3$ (the orbit tree of $C_{2^3}\times C_{2^3}$) depicted in Figure \ref{FigC8st}. We have renamed the variables $x_{T_3(2,i)}$ as $a_i$ for ease of writing, and for instance $C'(T_3(2,3)) = x_{T_3(2,3)} = a_3$, $C'(T_3(3,0)) = 1$, and $C'(T_3(3,1)) = 0$. We note that $C'$ was obtained from $\LM^-_{(2,1,0)}$. Let us take $r=4$. We now apply the algorithm from Section \ref{SecTreeSearch} to this combination of $C'$ and $r$. (It will turn out we will only need to apply step (1) of the algorithm to fill in the values of all six variables $a_i$.)

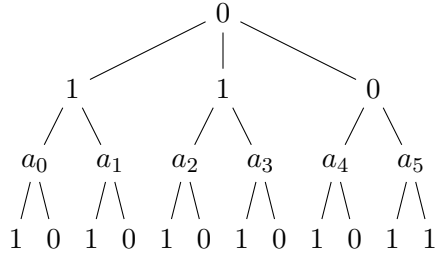
\begin{figure}%
\caption{$C_8\times C_8$ example, $C'$}%
\label{FigC8st}%
\begin{tikzpicture}
	\node (00) at (0,0) {$0$};
	\node (10) at (-2,-1) {$1$};
	\node (11) at (0,-1) {$1$};
	\node (12) at (2,-1) {$0$};
	\node (20) at (-2.5,-2) {$a_0$};
	\node (21) at (-1.5,-2) {$a_1$};
	\node (22) at (-0.5,-2) {$a_2$};
	\node (23) at (0.5,-2) {$a_3$};
	\node (24) at (1.5,-2) {$a_4$};
	\node (25) at (2.5,-2) {$a_5$};
	\node (30) at (-2.75,-3) {1};
	\node (31) at (-2.25,-3) {0};
	\node (32) at (-1.75,-3) {1};
	\node (33) at (-1.25,-3) {0};
	\node (34) at (-0.75,-3) {1};
	\node (35) at (-0.25,-3) {0};
	\node (36) at (0.25,-3) {1};
	\node (37) at (0.75,-3) {0};
	\node (38) at (1.25,-3) {1};
	\node (39) at (1.75,-3) {0};
	\node (310) at (2.25,-3) {1};
	\node (311) at (2.75,-3) {1};
	\draw (00)--(10);
	\draw (00)--(11);
	\draw (00)--(12);
	\draw (10)--(20);
	\draw (10)--(21);
	\draw (11)--(22);
	\draw (11)--(23);
	\draw (12)--(24);
	\draw (12)--(25);
	\draw (20)--(30);
	\draw (20)--(31);
	\draw (21)--(32);
	\draw (21)--(33);
	\draw (22)--(34);
	\draw (22)--(35);
	\draw (23)--(36);
	\draw (23)--(37);
	\draw (24)--(38);
	\draw (24)--(39);
	\draw (25)--(310);
	\draw (25)--(311);
\end{tikzpicture}
\end{figure}

Let us write $\chi_{(i,j)}$ for $\chi_{T_3(i,j)}$. 
First, using $\chi_{(3,0)}$ we see that $\chi_{(3,0)}(C') = 4 + 2a_0-2a_1+1-1$ and for there to be a PDS extending $C'$ we must have $\chi_{(3,0)}(C') \equiv r\pmod{4}$; that is,
\[
2a_0-2a_1 \equiv 0 \pmod{4}.
\]
Testing all four combinations of possibilities for $a_0$ and $a_1$ reveals that we must have $a_0=a_1$. Similarly, applying $\chi_{(3,4)}$ to $C'$ and reducing mod 4 reveals that $a_2=a_3$.

Next, $\chi_{(3,8)}(C') = 4 +2a_4 -2a_5-2$, and we require $\chi_{(3,8)}(C') \equiv r\pmod{4}$; that is,
\[
2+2a_4-2a_5 \equiv 0 \pmod{4}.
\]
Testing all four combinations of possibilities for $a_4$ and $a_5$ reveals that we must have $a_4\neq a_5$, i.e., $a_5 = 1-a_4$. Since we are searching only for canonical colorings we must therefore have $a_4=1$ and $a_5=0$. 
At this point the root $C'$ of our search tree has morphed into the coloring $Q$ depicted in Figure \ref{FigC8_2}.

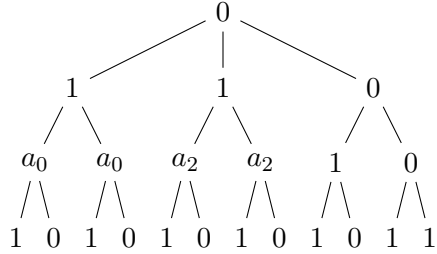
\begin{figure}%
\caption{$C_8\times C_8$ example, $Q$}%
\label{FigC8_2}%
\begin{tikzpicture}
	\node (00) at (0,0) {$0$};
	\node (10) at (-2,-1) {$1$};
	\node (11) at (0,-1) {$1$};
	\node (12) at (2,-1) {$0$};
	\node (20) at (-2.5,-2) {$a_0$};
	\node (21) at (-1.5,-2) {$a_0$};
	\node (22) at (-0.5,-2) {$a_2$};
	\node (23) at (0.5,-2) {$a_2$};
	\node (24) at (1.5,-2) {$1$};
	\node (25) at (2.5,-2) {$0$};
	\node (30) at (-2.75,-3) {1};
	\node (31) at (-2.25,-3) {0};
	\node (32) at (-1.75,-3) {1};
	\node (33) at (-1.25,-3) {0};
	\node (34) at (-0.75,-3) {1};
	\node (35) at (-0.25,-3) {0};
	\node (36) at (0.25,-3) {1};
	\node (37) at (0.75,-3) {0};
	\node (38) at (1.25,-3) {1};
	\node (39) at (1.75,-3) {0};
	\node (310) at (2.25,-3) {1};
	\node (311) at (2.75,-3) {1};
	\draw (00)--(10);
	\draw (00)--(11);
	\draw (00)--(12);
	\draw (10)--(20);
	\draw (10)--(21);
	\draw (11)--(22);
	\draw (11)--(23);
	\draw (12)--(24);
	\draw (12)--(25);
	\draw (20)--(30);
	\draw (20)--(31);
	\draw (21)--(32);
	\draw (21)--(33);
	\draw (22)--(34);
	\draw (22)--(35);
	\draw (23)--(36);
	\draw (23)--(37);
	\draw (24)--(38);
	\draw (24)--(39);
	\draw (25)--(310);
	\draw (25)--(311);
\end{tikzpicture}
\end{figure}

Continuing with characters of higher order, next we examine $\chi_{(2,0)}(Q) = 4-4+2a_0+2a_0 - 2a_2 - 2a_2 +1-1$. Reducing mod $8$ we obtain
\[
4a_0 - 4a_2 \equiv 4 \pmod{8},
\]
which implies $a_0\neq a_2$. Finally, since we seek only canonical colorings we must have $a_0=1, a_1=0$. $Q$ has become the coloring in Figure \ref{FigC8_3}.

\begin{figure}%
\caption{$C_8\times C_8$ example, final result}%
\label{FigC8_3}%
\begin{tikzpicture}
	\node (00) at (0,0) {$0$};
	\node (10) at (-2,-1) {$1$};
	\node (11) at (0,-1) {$1$};
	\node (12) at (2,-1) {$0$};
	\node (20) at (-2.5,-2) {$1$};
	\node (21) at (-1.5,-2) {$1$};
	\node (22) at (-0.5,-2) {$0$};
	\node (23) at (0.5,-2) {$0$};
	\node (24) at (1.5,-2) {$1$};
	\node (25) at (2.5,-2) {$0$};
	\node (30) at (-2.75,-3) {1};
	\node (31) at (-2.25,-3) {0};
	\node (32) at (-1.75,-3) {1};
	\node (33) at (-1.25,-3) {0};
	\node (34) at (-0.75,-3) {1};
	\node (35) at (-0.25,-3) {0};
	\node (36) at (0.25,-3) {1};
	\node (37) at (0.75,-3) {0};
	\node (38) at (1.25,-3) {1};
	\node (39) at (1.75,-3) {0};
	\node (310) at (2.25,-3) {1};
	\node (311) at (2.75,-3) {1};
	\draw (00)--(10);
	\draw (00)--(11);
	\draw (00)--(12);
	\draw (10)--(20);
	\draw (10)--(21);
	\draw (11)--(22);
	\draw (11)--(23);
	\draw (12)--(24);
	\draw (12)--(25);
	\draw (20)--(30);
	\draw (20)--(31);
	\draw (21)--(32);
	\draw (21)--(33);
	\draw (22)--(34);
	\draw (22)--(35);
	\draw (23)--(36);
	\draw (23)--(37);
	\draw (24)--(38);
	\draw (24)--(39);
	\draw (25)--(310);
	\draw (25)--(311);
\end{tikzpicture}
\end{figure}
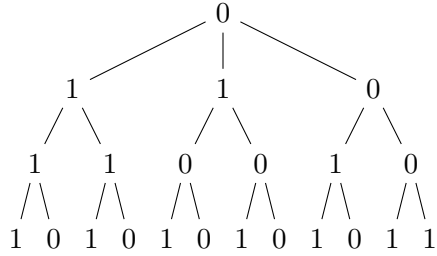

We check all eigenvalues of our final coloring and find that all nontrivial eigenvalues are $r=4$ or $r-8 = -4$, and that the trivial eigenvalue is 36; hence the resulting coloring of $T_3$ is one of the PDS in $G_3$ we seek. (It is the sixth PDS in Table \ref{PDSn3} below.)

\subsection{A non-example in $C_{8}\times C_{8}$} Let $C'$ be the coloring of $T_3$ depicted in Figure \ref{FigC8_non_st}. We note that $C'$ was obtained from $\LM_{(0,1,0)}^-$. Again we have renamed the variables $x_{T_3(2,i)}$ as $a_i$, and the characters $\chi_{T_3(i,j)}$ as $\chi_{(i,j)}$. We take $r=2$ or $r=6$, and apply the algorithm from Section \ref{SecTreeSearch} to this combination of $C'$ and $r$.

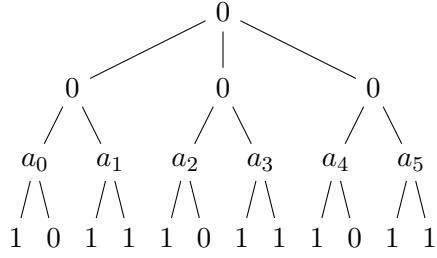
\begin{figure}%
\caption{$C_8\times C_8$ non-example, $C'$}%
\label{FigC8_non_st}%
\begin{tikzpicture}
	\node (00) at (0,0) {$0$};
	\node (10) at (-2,-1) {$0$};
	\node (11) at (0,-1) {$0$};
	\node (12) at (2,-1) {$0$};
	\node (20) at (-2.5,-2) {$a_0$};
	\node (21) at (-1.5,-2) {$a_1$};
	\node (22) at (-0.5,-2) {$a_2$};
	\node (23) at (0.5,-2) {$a_3$};
	\node (24) at (1.5,-2) {$a_4$};
	\node (25) at (2.5,-2) {$a_5$};
	\node (30) at (-2.75,-3) {1};
	\node (31) at (-2.25,-3) {0};
	\node (32) at (-1.75,-3) {1};
	\node (33) at (-1.25,-3) {1};
	\node (34) at (-0.75,-3) {1};
	\node (35) at (-0.25,-3) {0};
	\node (36) at (0.25,-3) {1};
	\node (37) at (0.75,-3) {1};
	\node (38) at (1.25,-3) {1};
	\node (39) at (1.75,-3) {0};
	\node (310) at (2.25,-3) {1};
	\node (311) at (2.75,-3) {1};
	\draw (00)--(10);
	\draw (00)--(11);
	\draw (00)--(12);
	\draw (10)--(20);
	\draw (10)--(21);
	\draw (11)--(22);
	\draw (11)--(23);
	\draw (12)--(24);
	\draw (12)--(25);
	\draw (20)--(30);
	\draw (20)--(31);
	\draw (21)--(32);
	\draw (21)--(33);
	\draw (22)--(34);
	\draw (22)--(35);
	\draw (23)--(36);
	\draw (23)--(37);
	\draw (24)--(38);
	\draw (24)--(39);
	\draw (25)--(310);
	\draw (25)--(311);
\end{tikzpicture}
\end{figure}

First, $\chi_{(3,0)}(C') = 4 +2a_0-2a_1$. Reducing mod 4 and setting equivalent to $r$, we obtain $2a_0-2a_1\equiv 2 \pmod{4}$, and hence $a_0\neq a_1$. Since we seek only canonical colorings we must have $a_0=1, a_1=0$. 

Similarly, using $\chi_{(3,4)}$ and $\chi_{(3,8)}$ we obtain $a_2=1,a_3=0,a_4=1,a_5=0$.
At this point $C'$ has become the coloring $Q$ depicted in Figure \ref{FigC8_non_2}.

\begin{figure}%
\caption{$C_8\times C_8$ non-example, $Q$}%
\label{FigC8_non_2}%
\begin{tikzpicture}
	\node (00) at (0,0) {$0$};
	\node (10) at (-2,-1) {$0$};
	\node (11) at (0,-1) {$0$};
	\node (12) at (2,-1) {$0$};
	\node (20) at (-2.5,-2) {$1$};
	\node (21) at (-1.5,-2) {$0$};
	\node (22) at (-0.5,-2) {$1$};
	\node (23) at (0.5,-2) {$0$};
	\node (24) at (1.5,-2) {$1$};
	\node (25) at (2.5,-2) {$0$};
	\node (30) at (-2.75,-3) {1};
	\node (31) at (-2.25,-3) {0};
	\node (32) at (-1.75,-3) {1};
	\node (33) at (-1.25,-3) {1};
	\node (34) at (-0.75,-3) {1};
	\node (35) at (-0.25,-3) {0};
	\node (36) at (0.25,-3) {1};
	\node (37) at (0.75,-3) {1};
	\node (38) at (1.25,-3) {1};
	\node (39) at (1.75,-3) {0};
	\node (310) at (2.25,-3) {1};
	\node (311) at (2.75,-3) {1};
	\draw (00)--(10);
	\draw (00)--(11);
	\draw (00)--(12);
	\draw (10)--(20);
	\draw (10)--(21);
	\draw (11)--(22);
	\draw (11)--(23);
	\draw (12)--(24);
	\draw (12)--(25);
	\draw (20)--(30);
	\draw (20)--(31);
	\draw (21)--(32);
	\draw (21)--(33);
	\draw (22)--(34);
	\draw (22)--(35);
	\draw (23)--(36);
	\draw (23)--(37);
	\draw (24)--(38);
	\draw (24)--(39);
	\draw (25)--(310);
	\draw (25)--(311);
\end{tikzpicture}
\end{figure}
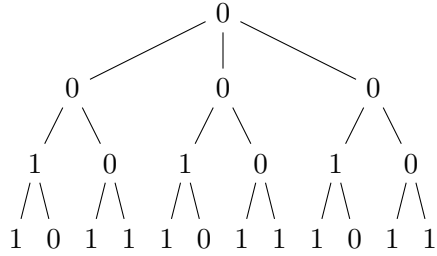

For $Q$ to be a PDS it must obey Theorem \ref{ThmPDSEig}. We apply characters and find:
\begin{itemize}
	\item $\chi_{(0,0)}(Q) = 42$.
	\item $\chi_{(3,0)}(Q) = 6$.
	\item $\chi_{(2,1)}(Q) = 2$.
\end{itemize}
Since $Q$ has too many positive eigenvalues $Q$ is not (or in more general cases, cannot be extended to) a PDS.

\subsection{An example in $C_{16}\times C_{16}$} Let $C'$ be the coloring of $T_4$ depicted in Figure \ref{FigC16st}. Again we have renamed the variables of the form $x_{T_4(i,j)}$ as variables of the form $a_k$, and we continue to write $\chi_{(i,j)}$ for $\chi_{T_4(i,j)}$. We note that $C'$ was obtained from $\LM_{(2,1,1,0)}^-$. For this example we take $r=8$ and we apply the algorithm from Section \ref{SecTreeSearch} to this combination of $C'$ and $r$. Despite the apparent similarity to the example from Section \ref{SecC8ex} several new wrinkles arise. 

\begin{figure}%
\caption{$C_{16}\times C_{16}$ example, $C'$}%
\label{FigC16st}%
\begin{tikzpicture}
	\node (00) at (0,0) {$0$};
	\node (10) at (-4,-1) {$1$};
	\node (11) at (0,-1) {$1$};
	\node (12) at (4,-1) {$0$};
	\node (20) at (-5,-2) {$a_0$};
	\node (21) at (-3,-2) {$a_1$};
	\node (22) at (-1,-2) {$a_2$};
	\node (23) at (1,-2) {$a_3$};
	\node (24) at (3,-2) {$a_4$};
	\node (25) at (5,-2) {$a_5$};
	\node (30) at (-5.5,-3) {$a_6$};
	\node (31) at (-4.5,-3) {$a_7$};
	\node (32) at (-3.5,-3) {$a_8$};
	\node (33) at (-2.5,-3) {$a_9$};
	\node (34) at (-1.5,-3) {$a_{10}$};
	\node (35) at (-0.5,-3) {$a_{11}$};
	\node (36) at (0.5,-3) {$a_{12}$};
	\node (37) at (1.5,-3) {$a_{13}$};
	\node (38) at (2.5,-3) {$a_{14}$};
	\node (39) at (3.5,-3) {$a_{15}$};
	\node (310) at (4.5,-3) {$a_{16}$};
	\node (311) at (5.5,-3) {$a_{17}$};
\node (40) at (-5.75,-4) {$1$};
\node (41) at (-5.25,-4) {$0$};
\node (42) at (-4.75,-4) {$1$};
\node (43) at (-4.25,-4) {$0$};
\node (44) at (-3.75,-4) {$1$};
\node (45) at (-3.25,-4) {$0$};
\node (46) at (-2.75,-4) {$1$};
\node (47) at (-2.25,-4) {$0$};
\node (48) at (-1.75,-4) {$1$};
\node (49) at (-1.25,-4) {$0$};
\node (410) at (-0.75,-4) {$1$};
\node (411) at (-0.25,-4) {$0$};
\node (412) at (0.25,-4) {$1$};
\node (413) at (0.75,-4) {$0$};
\node (414) at (1.25,-4) {$1$};
\node (415) at (1.75,-4) {$0$};
\node (416) at (2.25,-4) {$1$};
\node (417) at (2.75,-4) {$0$};
\node (418) at (3.25,-4) {$1$};
\node (419) at (3.75,-4) {$0$};
\node (420) at (4.25,-4) {$1$};
\node (421) at (4.75,-4) {$0$};
\node (422) at (5.25,-4) {$1$};
\node (423) at (5.75,-4) {$1$};
	\draw (00)--(10);
	\draw (00)--(11);
	\draw (00)--(12);
	\draw (10)--(20);
	\draw (10)--(21);
	\draw (11)--(22);
	\draw (11)--(23);
	\draw (12)--(24);
	\draw (12)--(25);
	\draw (20)--(30);
	\draw (20)--(31);
	\draw (21)--(32);
	\draw (21)--(33);
	\draw (22)--(34);
	\draw (22)--(35);
	\draw (23)--(36);
	\draw (23)--(37);
	\draw (24)--(38);
	\draw (24)--(39);
	\draw (25)--(310);
	\draw (25)--(311);
	\draw (30)--(40);
	\draw (30)--(41);
	\draw (31)--(42);
	\draw (31)--(43);
	\draw (32)--(44);
	\draw (32)--(45);
	\draw (33)--(46);
	\draw (33)--(47);
	\draw (34)--(48);
	\draw (34)--(49);
	\draw (35)--(410);
	\draw (35)--(411);
	\draw (36)--(412);
	\draw (36)--(413);
	\draw (37)--(414);
	\draw (37)--(415);
	\draw (38)--(416);
	\draw (38)--(417);
	\draw (39)--(418);
	\draw (39)--(419);
	\draw (310)--(420);
	\draw (310)--(421);
	\draw (311)--(422);
	\draw (311)--(423);
\end{tikzpicture}
\end{figure}
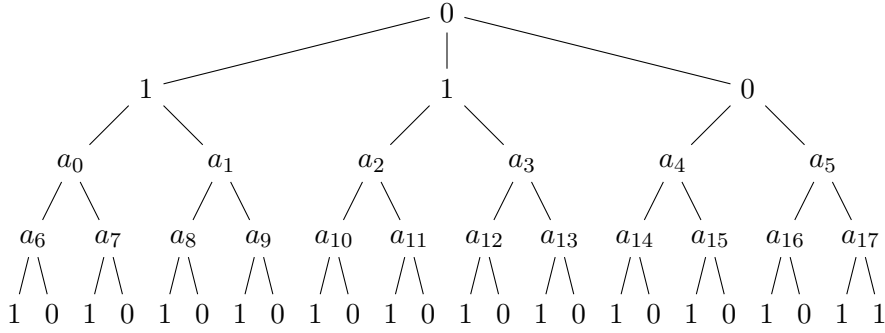

First, we have $\chi_{(4,0)}(C') = 8 + 4a_6 - 4a_7 +2a_0-2a_1+1-1$, and reducing mod 4 we have $2a_0-2a_1\equiv 0 \pmod{4}$. 
Hence $a_0=a_1$. Similarly, using $\chi_{(4,8)}$ we obtain $a_2=a_3$.  
Next, we examine $\chi_{(4,22)}(C') = 8-8+4a_{17}-4a_{16}+2a_5-2a_4-2$. Reducing mod 4 we obtain $2a_5-2a_4-2\equiv 0 \pmod{4}$. Hence $a_4\neq a_5$, and since we seek only canonical colorings we have $a_4=1, a_5=0$.

We now apply $\chi_{(4,0)}$ again. We have $\chi_{(4,0)}(C') = 8+4a_6-4a_7+2a_0-2a_1+1-1$. 
After making the substitution $a_0=a_1$ and reducing mod 8 we obtain $4a_6-4a_7\equiv 0 \pmod{8}$, and hence $a_6=a_7$. Similarly, using $\chi_{(4,8)}$ and reducing mod 8 reveals that $a_8=a_9$.

Next we examine $\chi_{(3,0)}(C') = 8 - 8 +4a_6+4a_7-4a_8-4a_9+2a_0+2a_1-2a_2-2a_3-2a_4-2a_5+1+1$. After making the substitutions above and reducing mod 8 we obtain $4a_0-4a_2 \equiv 0 \pmod{8}$, and hence $a_0=a_2$. Since we already knew $a_2=a_3$, we now have $a_0=a_1=a_2=a_3$. 

Applying $\chi_{(3,0)}$ again and reducing mod 16 we obtain $8a_6-8a_8\equiv 8 \pmod{16}$ (recall $r=8$), and hence $a_6\neq a_8$. 
Since we seek only canonical colorings and $a_0=a_1=a_2=a_3$ we must have $a_6=1, a_8=0$. Hence also $a_7=1, a_9=0$. Similarly, we find $a_{10}=1, a_{11}=1, a_{12}=0, a_{13}=0$. 

Next, applying $\chi_{(4,16)}$ and reducing mod 8 reveals that $a_{14}=a_{15}$, while applying $\chi_{(4,22)}$ and reducing mod 8 reveals that $a_{16}\neq a_{17}$, and since we seek only canonical colorings we have $a_{16}=1, a_{17}=0$.

Finally, we have $\chi_{(3,8)}(C') = 8-8 + 4a_{14} + 4a_{15} -4a_{16} -4a_{17} +2a_4+2a_5-2a_0 - 2a_1 -2a_2 -2a_3 +2$. After making the substitutions above and reducing mod 16 we obtain $8 a_{14}-8 a_{0} \equiv 8 \pmod{16}$, and hence $a_0\neq a_{14}$. 

We are now done with step (1) of the algorithm from Section \ref{SecTreeSearch}. Call the coloring that $C'$ has morphed into at this point $Q$. $Q$ is depicted in Figure \ref{FigC16_2}. Note that $a_0\neq a_{14} = a_{15}$ is recorded as $a_{15} = a_{14} = 1-a_{0}$, and that we have reduced the 18 variables from $C'$ to only one independent variable. 

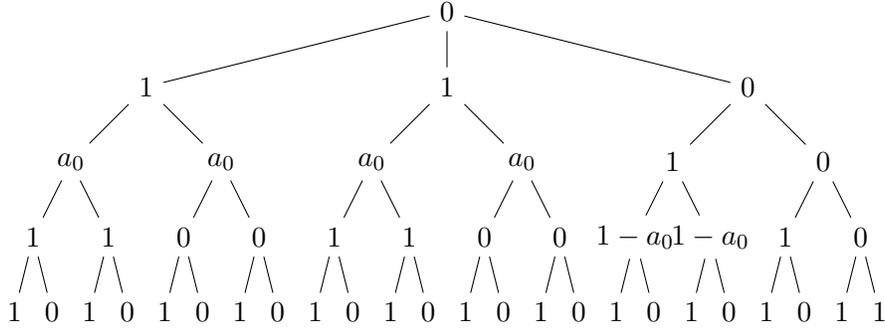
\begin{figure}%
\caption{$C_{16}\times C_{16}$ example, $Q$}%
\label{FigC16_2}%
\begin{tikzpicture}
	\node (00) at (0,0) {$0$};
	\node (10) at (-4,-1) {$1$};
	\node (11) at (0,-1) {$1$};
	\node (12) at (4,-1) {$0$};
	\node (20) at (-5,-2) {$a_0$};
	\node (21) at (-3,-2) {$a_0$};
	\node (22) at (-1,-2) {$a_0$};
	\node (23) at (1,-2) {$a_0$};
	\node (24) at (3,-2) {$1$};
	\node (25) at (5,-2) {$0$};
	\node (30) at (-5.5,-3) {$1$};
	\node (31) at (-4.5,-3) {$1$};
	\node (32) at (-3.5,-3) {$0$};
	\node (33) at (-2.5,-3) {$0$};
	\node (34) at (-1.5,-3) {$1$};
	\node (35) at (-0.5,-3) {$1$};
	\node (36) at (0.5,-3) {$0$};
	\node (37) at (1.5,-3) {$0$};
	\node (38) at (2.5,-3) {$1-a_0$};
	\node (39) at (3.5,-3) {$1-a_0$};
	\node (310) at (4.5,-3) {$1$};
	\node (311) at (5.5,-3) {$0$};
\node (40) at (-5.75,-4) {$1$};
\node (41) at (-5.25,-4) {$0$};
\node (42) at (-4.75,-4) {$1$};
\node (43) at (-4.25,-4) {$0$};
\node (44) at (-3.75,-4) {$1$};
\node (45) at (-3.25,-4) {$0$};
\node (46) at (-2.75,-4) {$1$};
\node (47) at (-2.25,-4) {$0$};
\node (48) at (-1.75,-4) {$1$};
\node (49) at (-1.25,-4) {$0$};
\node (410) at (-0.75,-4) {$1$};
\node (411) at (-0.25,-4) {$0$};
\node (412) at (0.25,-4) {$1$};
\node (413) at (0.75,-4) {$0$};
\node (414) at (1.25,-4) {$1$};
\node (415) at (1.75,-4) {$0$};
\node (416) at (2.25,-4) {$1$};
\node (417) at (2.75,-4) {$0$};
\node (418) at (3.25,-4) {$1$};
\node (419) at (3.75,-4) {$0$};
\node (420) at (4.25,-4) {$1$};
\node (421) at (4.75,-4) {$0$};
\node (422) at (5.25,-4) {$1$};
\node (423) at (5.75,-4) {$1$};
	\draw (00)--(10);
	\draw (00)--(11);
	\draw (00)--(12);
	\draw (10)--(20);
	\draw (10)--(21);
	\draw (11)--(22);
	\draw (11)--(23);
	\draw (12)--(24);
	\draw (12)--(25);
	\draw (20)--(30);
	\draw (20)--(31);
	\draw (21)--(32);
	\draw (21)--(33);
	\draw (22)--(34);
	\draw (22)--(35);
	\draw (23)--(36);
	\draw (23)--(37);
	\draw (24)--(38);
	\draw (24)--(39);
	\draw (25)--(310);
	\draw (25)--(311);
	\draw (30)--(40);
	\draw (30)--(41);
	\draw (31)--(42);
	\draw (31)--(43);
	\draw (32)--(44);
	\draw (32)--(45);
	\draw (33)--(46);
	\draw (33)--(47);
	\draw (34)--(48);
	\draw (34)--(49);
	\draw (35)--(410);
	\draw (35)--(411);
	\draw (36)--(412);
	\draw (36)--(413);
	\draw (37)--(414);
	\draw (37)--(415);
	\draw (38)--(416);
	\draw (38)--(417);
	\draw (39)--(418);
	\draw (39)--(419);
	\draw (310)--(420);
	\draw (310)--(421);
	\draw (311)--(422);
	\draw (311)--(423);
\end{tikzpicture}
\end{figure}

The root of our search tree thus has two children (one for $a_0=1$ and one for $a_0=0$). It turns out that both are PDS. (They are the ninth and tenth PDS in Table \ref{PDSn4} below.)

The example from this section was a search tree consisting of three nodes---the root and its two children. 
The vast majority of our work consisted of reducing the number of variables in the root before creating its children. In our experience, for larger values of $n$ many search trees continue to remain small (having only one or a single-digit number of nodes), although the amount of work required to reduce and simplify the variables involved grows quickly, and some search trees have thousands of nodes.
\appendix
\section{The partial difference sets in $C_{2^n}\times C_{2^n}$ for $2\leq n\leq 9$}
\label{SecOutput}

In this appendix we list the regular nontrivial canonical PDS of even size in $G_n = C_{2^n}\times C_{2^n}$ for $2\leq n\leq 9$. 

We obtained all of our results with a parallelized {\tt Sage} \cite{Sage} implementation of our algorithm, which we ran on a computational server hosted at Sam Houston State University. The server is equipped with four AMD Opteron\textsuperscript{TM} 6272 processors (totaling 64 cores, each running at 2.1 GHz) and 256 GB of RAM. For $n\leq 9$ our enumeration of the regular nontrivial canonical PDS of even size is complete, and took about 5 CPU-years to obtain.
For $n\geq 10$ we halted our implementation of our algorithm after 24 days and 30 days of real runtime (4.2 CPU-years and 5.3 CPU-years), respectively. We suspect that for $n=10$ our data is missing 16 canonical RHPDS of type B, while for $n=11$ it is missing 32 canonical RHPDS and one sporadic canonical PDS. All of the canonical PDS output by our program (including the complete list for $n\leq 9$) are available at the website \cite{OurWebsite}.

To read the tables in this section and at our website, order the elements of the orbit tree $T_n$ according to their position in the linear order $(L(O_1),\sqsubset)$. That is, the first element of $T_n$ is $T_n(0,0)$, the second is $T_n(1,0)$, the third is $T_n(1,1)$, the fourth is $T_n(1,2)$, the fifth is $T_n(2,0)$, etc. Under this ordering the subsets of $T_n$ are in bijection with the binary strings of length $|T_n|$, and the binary strings in our table are the regular nontrivial canonical PDS of even size in $T_n$. For instance, the PDS depicted in Figure \ref{FigC8_3} is encoded as the binary string $0;110;110010;101010101011$, where we have inserted semicolons between the rows of the coloring for readability. Using standard exponential notation for strings we may write $$0;110;110010;101010101011 = 0;1^2 0;1^2 0^2 1 0; (10)^5 1^2,$$ which is the sixth entry in Table \ref{PDSn3}. We note that the entries at our website do {\em not} use exponential notation and hence look like the string on the left instead.

The following PDS in $G_n$ were known before this paper:
\begin{itemize}
	\item All PDS for $n = 2$ \cite{LeifmanMuzychuk95}.
	\item All PDS of the form PCP(2) or PCP(3)$^c$. For each $n>3$ there is one PDS of each of these types. They are the first and last entries (respectively) in each table, and are homogenous \cite{LeifmanMuzychuk95}. 
	\item The second PDS in Table \ref{PDSn3} \cite{StuckeyThesis, HaemersSpence, LeifmanMuzychuk95}.
	\item The second PDS in Table \ref{PDSn4} \cite{GearheartThesis}.
\end{itemize}
To the knowledge of the authors all of the other PDS reported here and at our website are new.

All of the binary strings of the regular canonical PDS of even size in $T_n$ follow, along with their eigenvalues and multiplicities, for $2\leq n \leq 9$.

$n=2$:
\begin{longtable}{|p{1\linewidth}|}
\hline
$k = 6$, $r = 2^{6}$, $s = -2^{9}$ \\
$0$; $0^{3}$; $101010$ \\
\hline
$k = 6$, $r = 2^{6}$, $s = -2^{9}$ \\
$0$; $1^{2}0$; $1010^{3}$ \\
\hline
$k = 10$, $r = 2^{5}$, $s = -2^{10}$ \\
$0$; $1^{2}0$; $10101^{2}$ \\
\hline
\caption{The PDS in $C_{2^n}\times C_{2^n}$, $n=2$}
\label{PDSn2}
\end{longtable}
$n=3$: 
\begin{longtable}{|p{1\linewidth}|}
\hline
$k = 14$, $r = 6^{14}$, $s = -2^{49}$ \\
$0$; $1^{2}0$; $1010^{3}$; $10^{3}10^{7}$ \\
\hline
$k = 18$, $r = 2^{45}$, $s = -6^{18}$ \\
$0$; $0^{3}$; $101010$; $0^{2}(1000)^{2}10$ \\
\hline
$k = 28$, $r = 4^{28}$, $s = -4^{35}$ \\
$0$; $1^{2}0$; $1^{2}0^{2}10$; $(10)^{5}0^{2}$ \\
\hline
$k = 28$, $r = 4^{28}$, $s = -4^{35}$ \\
$0$; $0^{3}$; $1^{2}0^{4}$; $(10)^{6}$ \\
\hline
$k = 36$, $r = 4^{27}$, $s = -4^{36}$ \\
$0$; $0^{3}$; $1^{6}$; $(10)^{6}$ \\
\hline
$k = 36$, $r = 4^{27}$, $s = -4^{36}$ \\
$0$; $1^{2}0$; $1^{2}0^{2}10$; $(10)^{5}1^{2}$ \\
\hline
$k = 42$, $r = 2^{42}$, $s = -6^{21}$ \\
$0$; $1^{2}0$; $10101^{2}$; $1^{3}01^{3}01010$ \\
\hline
$k = 42$, $r = 2^{42}$, $s = -6^{21}$ \\
$0$; $0^{3}$; $101010$; $1^{3}(0111)^{2}0$ \\
\hline
\caption{The PDS in $C_{2^n}\times C_{2^n}$, $n=3$}
\label{PDSn3}
\end{longtable}
$n=4$:
\begin{longtable}{|p{1\linewidth}|}
\hline
$k = 30$, $r = 14^{30}$, $s = -2^{225}$ \\
$0$; $1^{2}0$; $1010^{3}$; $10^{3}10^{7}$; $10^{7}10^{15}$ \\
\hline
$k = 90$, $r = 10^{90}$, $s = -6^{165}$ \\
$0$; $0^{3}$; $101010$; $0^{2}(1000)^{2}10$; $(10101000)^{3}$ \\
\hline
$k = 90$, $r = 10^{90}$, $s = -6^{165}$ \\
$0$; $1^{2}0$; $10101^{2}$; $0^{2}10^{3}101010$; $(10101000)^{2}(1000)^{2}$ \\
\hline
$k = 120$, $r = 8^{120}$, $s = -8^{135}$ \\
$0$; $0^{3}$; $0^{6}$; $1^{2}0^{2}1^{2}0^{2}1^{2}0^{2}$; $(10)^{12}$ \\
\hline
$k = 120$, $r = 8^{120}$, $s = -8^{135}$ \\
$0$; $0^{3}$; $1^{4}0^{2}$; $1^{2}0^{2}1^{2}0^{6}$; $(10)^{12}$ \\
\hline
$k = 120$, $r = 8^{120}$, $s = -8^{135}$ \\
$0$; $1^{2}0$; $0^{4}10$; $1^{2}0^{2}1^{2}0^{2}1^{3}0$; $(10)^{11}0^{2}$ \\
\hline
$k = 120$, $r = 8^{120}$, $s = -8^{135}$ \\
$0$; $1^{2}0$; $1^{5}0$; $1^{2}0^{2}1^{2}0^{4}10$; $(10)^{11}0^{2}$ \\
\hline
$k = 136$, $r = 8^{119}$, $s = -8^{136}$ \\
$0$; $0^{3}$; $1^{4}0^{2}$; $1^{2}0^{2}1^{2}0^{2}1^{4}$; $(10)^{12}$ \\
\hline
$k = 136$, $r = 8^{119}$, $s = -8^{136}$ \\
$0$; $1^{2}0$; $0^{4}10$; $1^{2}0^{2}1^{2}0^{2}1^{3}0$; $(10)^{11}1^{2}$ \\
\hline
$k = 136$, $r = 8^{119}$, $s = -8^{136}$ \\
$0$; $1^{2}0$; $1^{5}0$; $1^{2}0^{2}1^{2}0^{4}10$; $(10)^{11}1^{2}$ \\
\hline
$k = 150$, $r = 6^{150}$, $s = -10^{105}$ \\
$0$; $1^{2}0$; $1010^{3}$; $101^{3}01^{4}0^{2}$; $1^{3}(01)^{3}1^{2}(01)^{6}0$ \\
\hline
$k = 180$, $r = 4^{180}$, $s = -12^{75}$ \\
$0$; $0^{3}$; $1^{6}$; $(10)^{6}$; $(1110)^{6}$ \\
\hline
$k = 210$, $r = 2^{210}$, $s = -14^{45}$ \\
$0$; $0^{3}$; $101010$; $1^{3}(0111)^{2}0$; $1^{7}01^{7}01^{7}0$ \\
\hline
\caption{The PDS in $C_{2^n}\times C_{2^n}$, $n=4$}
\label{PDSn4}
\end{longtable}
$n=5$:
\begin{longtable}{|p{1\linewidth}|}
\hline
$k = 62$, $r = 30^{62}$, $s = -2^{961}$ \\
$0$; $1^{2}0$; $1010^{3}$; $10^{3}10^{7}$; $10^{7}10^{15}$; $10^{15}10^{31}$ \\
\hline
$k = 372$, $r = 20^{372}$, $s = -12^{651}$ \\
$0$; $0^{3}$; $1^{6}$; $(10)^{6}$; $(0010)^{6}$; $(10101000)^{6}$ \\
\hline
$k = 496$, $r = 16^{496}$, $s = -16^{527}$ \\
$0$; $0^{3}$; $0^{6}$; $1^{4}0^{8}$; $(1100)^{6}$; $(10)^{24}$ \\
\hline
$k = 496$, $r = 16^{496}$, $s = -16^{527}$ \\
$0$; $0^{3}$; $1^{4}0^{2}$; $1^{4}0^{4}1^{2}0^{2}$; $(1100)^{5}0^{4}$; $(10)^{24}$ \\
\hline
$k = 496$, $r = 16^{496}$, $s = -16^{527}$ \\
$0$; $1^{2}0$; $0^{4}10$; $1^{4}0^{6}10$; $(1100)^{5}1^{3}0$; $(10)^{23}0^{2}$ \\
\hline
$k = 496$, $r = 16^{496}$, $s = -16^{527}$ \\
$0$; $1^{2}0$; $1^{5}0$; $1^{4}0^{4}1^{3}0$; $(1100)^{5}0^{2}10$; $(10)^{23}0^{2}$ \\
\hline
$k = 528$, $r = 16^{495}$, $s = -16^{528}$ \\
$0$; $0^{3}$; $0^{6}$; $1^{12}$; $(1100)^{6}$; $(10)^{24}$ \\
\hline
$k = 528$, $r = 16^{495}$, $s = -16^{528}$ \\
$0$; $0^{3}$; $1^{4}0^{2}$; $1^{4}0^{4}1^{2}0^{2}$; $(1100)^{5}1^{4}$; $(10)^{24}$ \\
\hline
$k = 528$, $r = 16^{495}$, $s = -16^{528}$ \\
$0$; $1^{2}0$; $0^{4}10$; $1^{4}0^{6}10$; $(1100)^{5}1^{3}0$; $(10)^{23}1^{2}$ \\
\hline
$k = 528$, $r = 16^{495}$, $s = -16^{528}$ \\
$0$; $1^{2}0$; $1^{5}0$; $1^{4}0^{4}1^{3}0$; $(1100)^{5}0^{2}10$; $(10)^{23}1^{2}$ \\
\hline
$k = 558$, $r = 14^{558}$, $s = -18^{465}$ \\
$0$; $1^{2}0$; $1010^{3}$; $(1000)^{2}1^{4}$; $(10111100)^{2}1^{2}0^{2}1^{2}0^{2}$; $1^{3}(01)^{7}1^{2}(01)^{14}0$ \\
\hline
$k = 930$, $r = 2^{930}$, $s = -30^{93}$ \\
$0$; $0^{3}$; $101010$; $1^{3}(0111)^{2}0$; $1^{7}01^{7}01^{7}0$; $1^{15}01^{15}01^{15}0$ \\
\hline
\caption{The PDS in $C_{2^n}\times C_{2^n}$, $n=5$}
\label{PDSn5}
\end{longtable}
$n=6$:
\begin{longtable}{|p{1\linewidth}|}
\hline
$k = 126$, $r = 62^{126}$, $s = -2^{3969}$ \\
$0$; $1^{2}0$; $1010^{3}$; $10^{3}10^{7}$; $10^{7}10^{15}$; $10^{15}10^{31}$; $10^{31}10^{63}$ \\
\hline
$k = 1890$, $r = 34^{1890}$, $s = -30^{2205}$ \\
$0$; $0^{3}$; $101010$; $1^{3}(0111)^{2}0$; $0^{4}1^{3}0^{5}1^{3}0^{5}1^{3}0$; $(1100110011000010)^{3}$; $(10)^{15}0^{2}(10)^{15}0^{2}(10)^{15}0^{2}$ \\
\hline
$k = 2016$, $r = 32^{2016}$, $s = -32^{2079}$ \\
$0$; $0^{3}$; $0^{6}$; $0^{12}$; $1^{4}0^{4}1^{4}0^{4}1^{4}0^{4}$; $(1100)^{12}$; $(10)^{48}$ \\
\hline
$k = 2016$, $r = 32^{2016}$, $s = -32^{2079}$ \\
$0$; $0^{3}$; $0^{6}$; $1^{8}0^{4}$; $1^{4}0^{4}1^{4}0^{12}$; $(1100)^{12}$; $(10)^{48}$ \\
\hline
$k = 2016$, $r = 32^{2016}$, $s = -32^{2079}$ \\
$0$; $0^{3}$; $1^{4}0^{2}$; $0^{8}1^{2}0^{2}$; $1^{4}0^{4}1^{4}0^{4}1^{6}0^{2}$; $(1100)^{11}0^{4}$; $(10)^{48}$ \\
\hline
$k = 2016$, $r = 32^{2016}$, $s = -32^{2079}$ \\
$0$; $0^{3}$; $1^{4}0^{2}$; $1^{10}0^{2}$; $1^{4}0^{4}1^{4}0^{8}1^{2}0^{2}$; $(1100)^{11}0^{4}$; $(10)^{48}$ \\
\hline
$k = 2016$, $r = 32^{2016}$, $s = -32^{2079}$ \\
$0$; $1^{2}0$; $0^{4}10$; $0^{10}10$; $1^{4}0^{4}1^{4}0^{4}1^{4}0^{2}10$; $(1100)^{11}1^{3}0$; $(10)^{47}0^{2}$ \\
\hline
$k = 2016$, $r = 32^{2016}$, $s = -32^{2079}$ \\
$0$; $1^{2}0$; $0^{4}10$; $1^{8}0^{2}10$; $1^{4}0^{4}1^{4}0^{10}10$; $(1100)^{11}1^{3}0$; $(10)^{47}0^{2}$ \\
\hline
$k = 2016$, $r = 32^{2016}$, $s = -32^{2079}$ \\
$0$; $1^{2}0$; $1^{5}0$; $0^{8}1^{3}0$; $1^{4}0^{4}1^{4}0^{4}1^{7}0$; $(1100)^{11}0^{2}10$; $(10)^{47}0^{2}$ \\
\hline
$k = 2016$, $r = 32^{2016}$, $s = -32^{2079}$ \\
$0$; $1^{2}0$; $1^{5}0$; $1^{11}0$; $1^{4}0^{4}1^{4}0^{8}1^{3}0$; $(1100)^{11}0^{2}10$; $(10)^{47}0^{2}$ \\
\hline
$k = 2080$, $r = 32^{2015}$, $s = -32^{2080}$ \\
$0$; $0^{3}$; $0^{6}$; $1^{8}0^{4}$; $1^{4}0^{4}1^{4}0^{4}1^{8}$; $(1100)^{12}$; $(10)^{48}$ \\
\hline
$k = 2080$, $r = 32^{2015}$, $s = -32^{2080}$ \\
$0$; $0^{3}$; $1^{4}0^{2}$; $0^{8}1^{2}0^{2}$; $1^{4}0^{4}1^{4}0^{4}1^{6}0^{2}$; $(1100)^{11}1^{4}$; $(10)^{48}$ \\
\hline
$k = 2080$, $r = 32^{2015}$, $s = -32^{2080}$ \\
$0$; $0^{3}$; $1^{4}0^{2}$; $1^{10}0^{2}$; $1^{4}0^{4}1^{4}0^{8}1^{2}0^{2}$; $(1100)^{11}1^{4}$; $(10)^{48}$ \\
\hline
$k = 2080$, $r = 32^{2015}$, $s = -32^{2080}$ \\
$0$; $1^{2}0$; $0^{4}10$; $0^{10}10$; $1^{4}0^{4}1^{4}0^{4}1^{4}0^{2}10$; $(1100)^{11}1^{3}0$; $(10)^{47}1^{2}$ \\
\hline
$k = 2080$, $r = 32^{2015}$, $s = -32^{2080}$ \\
$0$; $1^{2}0$; $0^{4}10$; $1^{8}0^{2}10$; $1^{4}0^{4}1^{4}0^{10}10$; $(1100)^{11}1^{3}0$; $(10)^{47}1^{2}$ \\
\hline
$k = 2080$, $r = 32^{2015}$, $s = -32^{2080}$ \\
$0$; $1^{2}0$; $1^{5}0$; $0^{8}1^{3}0$; $1^{4}0^{4}1^{4}0^{4}1^{7}0$; $(1100)^{11}0^{2}10$; $(10)^{47}1^{2}$ \\
\hline
$k = 2080$, $r = 32^{2015}$, $s = -32^{2080}$ \\
$0$; $1^{2}0$; $1^{5}0$; $1^{11}0$; $1^{4}0^{4}1^{4}0^{8}1^{3}0$; $(1100)^{11}0^{2}10$; $(10)^{47}1^{2}$ \\
\hline
$k = 2142$, $r = 30^{2142}$, $s = -34^{1953}$ \\
$0$; $1^{2}0$; $1010^{3}$; $10^{3}10^{7}$; $(10001111)^{2}1^{4}0^{4}$; $(1011110011001100)^{2}(1100)^{4}$; $1^{3}(01)^{15}1^{2}(01)^{30}0$ \\
\hline
$k = 3906$, $r = 2^{3906}$, $s = -62^{189}$ \\
$0$; $0^{3}$; $101010$; $1^{3}(0111)^{2}0$; $1^{7}01^{7}01^{7}0$; $1^{15}01^{15}01^{15}0$; $1^{31}01^{31}01^{31}0$ \\
\hline
\caption{The PDS in $C_{2^n}\times C_{2^n}$, $n=6$}
\label{PDSn6}
\end{longtable}
$n=7$:
\begin{longtable}{|p{1\linewidth}|}
\hline
$k = 254$, $r = 126^{254}$, $s = -2^{16129}$ \\
$0$; $1^{2}0$; $1010^{3}$; $10^{3}10^{7}$; $10^{7}10^{15}$; $10^{15}10^{31}$; $10^{31}10^{63}$; $10^{63}10^{127}$ \\
\hline
$k = 8128$, $r = 64^{8128}$, $s = -64^{8255}$ \\
$0$; $0^{3}$; $0^{6}$; $0^{12}$; $1^{8}0^{16}$; $(11110000)^{6}$; $(1100)^{24}$; $(10)^{96}$ \\
\hline
$k = 8128$, $r = 64^{8128}$, $s = -64^{8255}$ \\
$0$; $0^{3}$; $0^{6}$; $1^{8}0^{4}$; $1^{8}0^{8}1^{4}0^{4}$; $1^{4}0^{4}1^{4}0^{4}1^{4}0^{4}1^{4}0^{4}1^{4}0^{12}$; $(1100)^{24}$; $(10)^{96}$ \\
\hline
$k = 8128$, $r = 64^{8128}$, $s = -64^{8255}$ \\
$0$; $0^{3}$; $1^{4}0^{2}$; $0^{8}1^{2}0^{2}$; $1^{8}0^{12}1^{2}0^{2}$; $1^{4}0^{4}1^{4}0^{4}1^{4}0^{4}1^{4}0^{4}1^{4}0^{4}1^{6}0^{2}$; $(1100)^{23}0^{4}$; $(10)^{96}$ \\
\hline
$k = 8128$, $r = 64^{8128}$, $s = -64^{8255}$ \\
$0$; $0^{3}$; $1^{4}0^{2}$; $1^{10}0^{2}$; $1^{8}0^{8}1^{6}0^{2}$; $1^{4}0^{4}1^{4}0^{4}1^{4}0^{4}1^{4}0^{4}1^{4}0^{8}1^{2}0^{2}$; $(1100)^{23}0^{4}$; $(10)^{96}$ \\
\hline
$k = 8128$, $r = 64^{8128}$, $s = -64^{8255}$ \\
$0$; $1^{2}0$; $0^{4}10$; $0^{10}10$; $1^{8}0^{14}10$; $1^{4}0^{4}1^{4}0^{4}1^{4}0^{4}1^{4}0^{4}1^{4}0^{4}1^{4}0^{2}10$; $(1100)^{23}1^{3}0$; $(10)^{95}0^{2}$ \\
\hline
$k = 8128$, $r = 64^{8128}$, $s = -64^{8255}$ \\
$0$; $1^{2}0$; $0^{4}10$; $1^{8}0^{2}10$; $1^{8}0^{8}1^{4}0^{2}10$; $1^{4}0^{4}1^{4}0^{4}1^{4}0^{4}1^{4}0^{4}1^{4}0^{10}10$; $(1100)^{23}1^{3}0$; $(10)^{95}0^{2}$ \\
\hline
$k = 8128$, $r = 64^{8128}$, $s = -64^{8255}$ \\
$0$; $1^{2}0$; $1^{5}0$; $0^{8}1^{3}0$; $1^{8}0^{12}1^{3}0$; $1^{4}0^{4}1^{4}0^{4}1^{4}0^{4}1^{4}0^{4}1^{4}0^{4}1^{7}0$; $(1100)^{23}0^{2}10$; $(10)^{95}0^{2}$ \\
\hline
$k = 8128$, $r = 64^{8128}$, $s = -64^{8255}$ \\
$0$; $1^{2}0$; $1^{5}0$; $1^{11}0$; $1^{8}0^{8}1^{7}0$; $1^{4}0^{4}1^{4}0^{4}1^{4}0^{4}1^{4}0^{4}1^{4}0^{8}1^{3}0$; $(1100)^{23}0^{2}10$; $(10)^{95}0^{2}$ \\
\hline
$k = 8256$, $r = 64^{8127}$, $s = -64^{8256}$ \\
$0$; $0^{3}$; $0^{6}$; $0^{12}$; $1^{24}$; $(11110000)^{6}$; $(1100)^{24}$; $(10)^{96}$ \\
\hline
$k = 8256$, $r = 64^{8127}$, $s = -64^{8256}$ \\
$0$; $0^{3}$; $0^{6}$; $1^{8}0^{4}$; $1^{8}0^{8}1^{4}0^{4}$; $1^{4}0^{4}1^{4}0^{4}1^{4}0^{4}1^{4}0^{4}1^{4}0^{4}1^{8}$; $(1100)^{24}$; $(10)^{96}$ \\
\hline
$k = 8256$, $r = 64^{8127}$, $s = -64^{8256}$ \\
$0$; $0^{3}$; $1^{4}0^{2}$; $0^{8}1^{2}0^{2}$; $1^{8}0^{12}1^{2}0^{2}$; $1^{4}0^{4}1^{4}0^{4}1^{4}0^{4}1^{4}0^{4}1^{4}0^{4}1^{6}0^{2}$; $(1100)^{23}1^{4}$; $(10)^{96}$ \\
\hline
$k = 8256$, $r = 64^{8127}$, $s = -64^{8256}$ \\
$0$; $0^{3}$; $1^{4}0^{2}$; $1^{10}0^{2}$; $1^{8}0^{8}1^{6}0^{2}$; $1^{4}0^{4}1^{4}0^{4}1^{4}0^{4}1^{4}0^{4}1^{4}0^{8}1^{2}0^{2}$; $(1100)^{23}1^{4}$; $(10)^{96}$ \\
\hline
$k = 8256$, $r = 64^{8127}$, $s = -64^{8256}$ \\
$0$; $1^{2}0$; $0^{4}10$; $0^{10}10$; $1^{8}0^{14}10$; $1^{4}0^{4}1^{4}0^{4}1^{4}0^{4}1^{4}0^{4}1^{4}0^{4}1^{4}0^{2}10$; $(1100)^{23}1^{3}0$; $(10)^{95}1^{2}$ \\
\hline
$k = 8256$, $r = 64^{8127}$, $s = -64^{8256}$ \\
$0$; $1^{2}0$; $0^{4}10$; $1^{8}0^{2}10$; $1^{8}0^{8}1^{4}0^{2}10$; $1^{4}0^{4}1^{4}0^{4}1^{4}0^{4}1^{4}0^{4}1^{4}0^{10}10$; $(1100)^{23}1^{3}0$; $(10)^{95}1^{2}$ \\
\hline
$k = 8256$, $r = 64^{8127}$, $s = -64^{8256}$ \\
$0$; $1^{2}0$; $1^{5}0$; $0^{8}1^{3}0$; $1^{8}0^{12}1^{3}0$; $1^{4}0^{4}1^{4}0^{4}1^{4}0^{4}1^{4}0^{4}1^{4}0^{4}1^{7}0$; $(1100)^{23}0^{2}10$; $(10)^{95}1^{2}$ \\
\hline
$k = 8256$, $r = 64^{8127}$, $s = -64^{8256}$ \\
$0$; $1^{2}0$; $1^{5}0$; $1^{11}0$; $1^{8}0^{8}1^{7}0$; $1^{4}0^{4}1^{4}0^{4}1^{4}0^{4}1^{4}0^{4}1^{4}0^{8}1^{3}0$; $(1100)^{23}0^{2}10$; $(10)^{95}1^{2}$ \\
\hline
$k = 8382$, $r = 62^{8382}$, $s = -66^{8001}$ \\
$0$; $1^{2}0$; $1010^{3}$; $10^{3}10^{7}$; $10^{7}10^{7}1^{8}$; $(1000111111110000)^{2}1^{4}0^{4}1^{4}0^{4}$; $(10111100110011001100110011001100)^{2}(1100)^{8}$; $1^{3}(01)^{31}1^{2}(01)^{62}0$ \\
\hline
$k = 16002$, $r = 2^{16002}$, $s = -126^{381}$ \\
$0$; $0^{3}$; $101010$; $1^{3}(0111)^{2}0$; $1^{7}01^{7}01^{7}0$; $1^{15}01^{15}01^{15}0$; $1^{31}01^{31}01^{31}0$; $1^{63}01^{63}01^{63}0$ \\
\hline
\caption{The PDS in $C_{2^n}\times C_{2^n}$, $n=7$}
\label{PDSn7}
\end{longtable}
$n=8$:
\begin{longtable}{|p{1\linewidth}|}
\hline
$k = 510$, $r = 254^{510}$, $s = -2^{65025}$ \\
$0$; $1^{2}0$; $1010^{3}$; $10^{3}10^{7}$; $10^{7}10^{15}$; $10^{15}10^{31}$; $10^{31}10^{63}$; $10^{63}10^{127}$; $10^{127}10^{255}$ \\
\hline
$k = 32130$, $r = 130^{32130}$, $s = -126^{33405}$ \\
$0$; $0^{3}$; $101010$; $1^{3}(0111)^{2}0$; $1^{7}01^{7}01^{7}0$; $0^{8}1^{7}0^{9}1^{7}0^{9}1^{7}0$;\\$1^{4}0^{4}1^{4}0^{4}1^{4}0^{8}1^{3}(01111000)^{3}0^{5}1^{3}(01111000)^{3}0^{5}1^{3}0$;\\$(1100)^{15}0^{2}(1011001100110011001100110011001100110011001100110011001100110000)^{2}10$; $(10)^{63}0^{2}(10)^{63}0^{2}(10)^{63}0^{2}$ \\
\hline
$k = 32640$, $r = 128^{32640}$, $s = -128^{32895}$ \\
$0$; $0^{3}$; $0^{6}$; $0^{12}$; $0^{24}$; $1^{8}0^{8}1^{8}0^{8}1^{8}0^{8}$; $(11110000)^{12}$; $(1100)^{48}$; $(10)^{192}$ \\
\hline
$k = 32640$, $r = 128^{32640}$, $s = -128^{32895}$ \\
$0$; $0^{3}$; $0^{6}$; $0^{12}$; $1^{16}0^{8}$; $1^{8}0^{8}1^{8}0^{24}$; $(11110000)^{12}$; $(1100)^{48}$; $(10)^{192}$ \\
\hline
$k = 32640$, $r = 128^{32640}$, $s = -128^{32895}$ \\
$0$; $0^{3}$; $0^{6}$; $1^{8}0^{4}$; $0^{16}1^{4}0^{4}$; $1^{8}0^{8}1^{8}0^{8}1^{12}0^{4}$; $(11110000)^{11}0^{8}$; $(1100)^{48}$; $(10)^{192}$ \\
\hline
$k = 32640$, $r = 128^{32640}$, $s = -128^{32895}$ \\
$0$; $0^{3}$; $0^{6}$; $1^{8}0^{4}$; $1^{20}0^{4}$; $1^{8}0^{8}1^{8}0^{16}1^{4}0^{4}$; $(11110000)^{11}0^{8}$; $(1100)^{48}$; $(10)^{192}$ \\
\hline
$k = 32640$, $r = 128^{32640}$, $s = -128^{32895}$ \\
$0$; $0^{3}$; $1^{4}0^{2}$; $0^{8}1^{2}0^{2}$; $0^{20}1^{2}0^{2}$; $1^{8}0^{8}1^{8}0^{8}1^{8}0^{4}1^{2}0^{2}$; $(11110000)^{11}1^{6}0^{2}$; $(1100)^{47}0^{4}$; $(10)^{192}$ \\
\hline
$k = 32640$, $r = 128^{32640}$, $s = -128^{32895}$ \\
$0$; $0^{3}$; $1^{4}0^{2}$; $0^{8}1^{2}0^{2}$; $1^{16}0^{4}1^{2}0^{2}$; $1^{8}0^{8}1^{8}0^{20}1^{2}0^{2}$; $(11110000)^{11}1^{6}0^{2}$; $(1100)^{47}0^{4}$; $(10)^{192}$ \\
\hline
$k = 32640$, $r = 128^{32640}$, $s = -128^{32895}$ \\
$0$; $0^{3}$; $1^{4}0^{2}$; $1^{10}0^{2}$; $0^{16}1^{6}0^{2}$; $1^{8}0^{8}1^{8}0^{8}1^{14}0^{2}$; $(11110000)^{11}0^{4}1^{2}0^{2}$; $(1100)^{47}0^{4}$; $(10)^{192}$ \\
\hline
$k = 32640$, $r = 128^{32640}$, $s = -128^{32895}$ \\
$0$; $0^{3}$; $1^{4}0^{2}$; $1^{10}0^{2}$; $1^{22}0^{2}$; $1^{8}0^{8}1^{8}0^{16}1^{6}0^{2}$; $(11110000)^{11}0^{4}1^{2}0^{2}$; $(1100)^{47}0^{4}$; $(10)^{192}$ \\
\hline
$k = 32640$, $r = 128^{32640}$, $s = -128^{32895}$ \\
$0$; $1^{2}0$; $0^{4}10$; $0^{10}10$; $0^{22}10$; $1^{8}0^{8}1^{8}0^{8}1^{8}0^{6}10$; $(11110000)^{11}1^{4}0^{2}10$; $(1100)^{47}1^{3}0$; $(10)^{191}0^{2}$ \\
\hline
$k = 32640$, $r = 128^{32640}$, $s = -128^{32895}$ \\
$0$; $1^{2}0$; $0^{4}10$; $0^{10}10$; $1^{16}0^{6}10$; $1^{8}0^{8}1^{8}0^{22}10$; $(11110000)^{11}1^{4}0^{2}10$; $(1100)^{47}1^{3}0$; $(10)^{191}0^{2}$ \\
\hline
$k = 32640$, $r = 128^{32640}$, $s = -128^{32895}$ \\
$0$; $1^{2}0$; $0^{4}10$; $1^{8}0^{2}10$; $0^{16}1^{4}0^{2}10$; $1^{8}0^{8}1^{8}0^{8}1^{12}0^{2}10$; $(11110000)^{11}0^{6}10$; $(1100)^{47}1^{3}0$; $(10)^{191}0^{2}$ \\
\hline
$k = 32640$, $r = 128^{32640}$, $s = -128^{32895}$ \\
$0$; $1^{2}0$; $0^{4}10$; $1^{8}0^{2}10$; $1^{20}0^{2}10$; $1^{8}0^{8}1^{8}0^{16}1^{4}0^{2}10$; $(11110000)^{11}0^{6}10$; $(1100)^{47}1^{3}0$; $(10)^{191}0^{2}$ \\
\hline
$k = 32640$, $r = 128^{32640}$, $s = -128^{32895}$ \\
$0$; $1^{2}0$; $1^{5}0$; $0^{8}1^{3}0$; $0^{20}1^{3}0$; $1^{8}0^{8}1^{8}0^{8}1^{8}0^{4}1^{3}0$; $(11110000)^{11}1^{7}0$; $(1100)^{47}0^{2}10$; $(10)^{191}0^{2}$ \\
\hline
$k = 32640$, $r = 128^{32640}$, $s = -128^{32895}$ \\
$0$; $1^{2}0$; $1^{5}0$; $0^{8}1^{3}0$; $1^{16}0^{4}1^{3}0$; $1^{8}0^{8}1^{8}0^{20}1^{3}0$; $(11110000)^{11}1^{7}0$; $(1100)^{47}0^{2}10$; $(10)^{191}0^{2}$ \\
\hline
$k = 32640$, $r = 128^{32640}$, $s = -128^{32895}$ \\
$0$; $1^{2}0$; $1^{5}0$; $1^{11}0$; $0^{16}1^{7}0$; $1^{8}0^{8}1^{8}0^{8}1^{15}0$; $(11110000)^{11}0^{4}1^{3}0$; $(1100)^{47}0^{2}10$; $(10)^{191}0^{2}$ \\
\hline
$k = 32640$, $r = 128^{32640}$, $s = -128^{32895}$ \\
$0$; $1^{2}0$; $1^{5}0$; $1^{11}0$; $1^{23}0$; $1^{8}0^{8}1^{8}0^{16}1^{7}0$; $(11110000)^{11}0^{4}1^{3}0$; $(1100)^{47}0^{2}10$; $(10)^{191}0^{2}$ \\
\hline
$k = 32896$, $r = 128^{32639}$, $s = -128^{32896}$ \\
$0$; $0^{3}$; $0^{6}$; $0^{12}$; $1^{16}0^{8}$; $1^{8}0^{8}1^{8}0^{8}1^{16}$; $(11110000)^{12}$; $(1100)^{48}$; $(10)^{192}$ \\
\hline
$k = 32896$, $r = 128^{32639}$, $s = -128^{32896}$ \\
$0$; $0^{3}$; $0^{6}$; $1^{8}0^{4}$; $0^{16}1^{4}0^{4}$; $1^{8}0^{8}1^{8}0^{8}1^{12}0^{4}$; $(11110000)^{11}1^{8}$; $(1100)^{48}$; $(10)^{192}$ \\
\hline
$k = 32896$, $r = 128^{32639}$, $s = -128^{32896}$ \\
$0$; $0^{3}$; $0^{6}$; $1^{8}0^{4}$; $1^{20}0^{4}$; $1^{8}0^{8}1^{8}0^{16}1^{4}0^{4}$; $(11110000)^{11}1^{8}$; $(1100)^{48}$; $(10)^{192}$ \\
\hline
$k = 32896$, $r = 128^{32639}$, $s = -128^{32896}$ \\
$0$; $0^{3}$; $1^{4}0^{2}$; $0^{8}1^{2}0^{2}$; $0^{20}1^{2}0^{2}$; $1^{8}0^{8}1^{8}0^{8}1^{8}0^{4}1^{2}0^{2}$; $(11110000)^{11}1^{6}0^{2}$; $(1100)^{47}1^{4}$; $(10)^{192}$ \\
\hline
$k = 32896$, $r = 128^{32639}$, $s = -128^{32896}$ \\
$0$; $0^{3}$; $1^{4}0^{2}$; $0^{8}1^{2}0^{2}$; $1^{16}0^{4}1^{2}0^{2}$; $1^{8}0^{8}1^{8}0^{20}1^{2}0^{2}$; $(11110000)^{11}1^{6}0^{2}$; $(1100)^{47}1^{4}$; $(10)^{192}$ \\
\hline
$k = 32896$, $r = 128^{32639}$, $s = -128^{32896}$ \\
$0$; $0^{3}$; $1^{4}0^{2}$; $1^{10}0^{2}$; $0^{16}1^{6}0^{2}$; $1^{8}0^{8}1^{8}0^{8}1^{14}0^{2}$; $(11110000)^{11}0^{4}1^{2}0^{2}$; $(1100)^{47}1^{4}$; $(10)^{192}$ \\
\hline
$k = 32896$, $r = 128^{32639}$, $s = -128^{32896}$ \\
$0$; $0^{3}$; $1^{4}0^{2}$; $1^{10}0^{2}$; $1^{22}0^{2}$; $1^{8}0^{8}1^{8}0^{16}1^{6}0^{2}$; $(11110000)^{11}0^{4}1^{2}0^{2}$; $(1100)^{47}1^{4}$; $(10)^{192}$ \\
\hline
$k = 32896$, $r = 128^{32639}$, $s = -128^{32896}$ \\
$0$; $1^{2}0$; $0^{4}10$; $0^{10}10$; $0^{22}10$; $1^{8}0^{8}1^{8}0^{8}1^{8}0^{6}10$; $(11110000)^{11}1^{4}0^{2}10$; $(1100)^{47}1^{3}0$; $(10)^{191}1^{2}$ \\
\hline
$k = 32896$, $r = 128^{32639}$, $s = -128^{32896}$ \\
$0$; $1^{2}0$; $0^{4}10$; $0^{10}10$; $1^{16}0^{6}10$; $1^{8}0^{8}1^{8}0^{22}10$; $(11110000)^{11}1^{4}0^{2}10$; $(1100)^{47}1^{3}0$; $(10)^{191}1^{2}$ \\
\hline
$k = 32896$, $r = 128^{32639}$, $s = -128^{32896}$ \\
$0$; $1^{2}0$; $0^{4}10$; $1^{8}0^{2}10$; $0^{16}1^{4}0^{2}10$; $1^{8}0^{8}1^{8}0^{8}1^{12}0^{2}10$; $(11110000)^{11}0^{6}10$; $(1100)^{47}1^{3}0$; $(10)^{191}1^{2}$ \\
\hline
$k = 32896$, $r = 128^{32639}$, $s = -128^{32896}$ \\
$0$; $1^{2}0$; $0^{4}10$; $1^{8}0^{2}10$; $1^{20}0^{2}10$; $1^{8}0^{8}1^{8}0^{16}1^{4}0^{2}10$; $(11110000)^{11}0^{6}10$; $(1100)^{47}1^{3}0$; $(10)^{191}1^{2}$ \\
\hline
$k = 32896$, $r = 128^{32639}$, $s = -128^{32896}$ \\
$0$; $1^{2}0$; $1^{5}0$; $0^{8}1^{3}0$; $0^{20}1^{3}0$; $1^{8}0^{8}1^{8}0^{8}1^{8}0^{4}1^{3}0$; $(11110000)^{11}1^{7}0$; $(1100)^{47}0^{2}10$; $(10)^{191}1^{2}$ \\
\hline
$k = 32896$, $r = 128^{32639}$, $s = -128^{32896}$ \\
$0$; $1^{2}0$; $1^{5}0$; $0^{8}1^{3}0$; $1^{16}0^{4}1^{3}0$; $1^{8}0^{8}1^{8}0^{20}1^{3}0$; $(11110000)^{11}1^{7}0$; $(1100)^{47}0^{2}10$; $(10)^{191}1^{2}$ \\
\hline
$k = 32896$, $r = 128^{32639}$, $s = -128^{32896}$ \\
$0$; $1^{2}0$; $1^{5}0$; $1^{11}0$; $0^{16}1^{7}0$; $1^{8}0^{8}1^{8}0^{8}1^{15}0$; $(11110000)^{11}0^{4}1^{3}0$; $(1100)^{47}0^{2}10$; $(10)^{191}1^{2}$ \\
\hline
$k = 32896$, $r = 128^{32639}$, $s = -128^{32896}$ \\
$0$; $1^{2}0$; $1^{5}0$; $1^{11}0$; $1^{23}0$; $1^{8}0^{8}1^{8}0^{16}1^{7}0$; $(11110000)^{11}0^{4}1^{3}0$; $(1100)^{47}0^{2}10$; $(10)^{191}1^{2}$ \\
\hline
$k = 33150$, $r = 126^{33150}$, $s = -130^{32385}$ \\
$0$; $1^{2}0$; $1010^{3}$; $10^{3}10^{7}$; $10^{7}10^{15}$; $10^{7}1^{9}0^{7}1^{16}0^{8}$; \\$(10001111111100001111000011110000)^{2}1^{4}0^{4}1^{4}0^{4}1^{4}0^{4}1^{4}0^{4}$; \\$(1011110011001100110011001100110011001100110011001100110011001100)^{2}(1100)^{16}$; $1^{3}(01)^{63}1^{2}(01)^{126}0$ \\
\hline
$k = 64770$, $r = 2^{64770}$, $s = -254^{765}$ \\
$0$; $0^{3}$; $101010$; $1^{3}(0111)^{2}0$; $1^{7}01^{7}01^{7}0$; $1^{15}01^{15}01^{15}0$; $1^{31}01^{31}01^{31}0$; $1^{63}01^{63}01^{63}0$; $1^{127}01^{127}01^{127}0$ \\
\hline
\caption{The PDS in $C_{2^n}\times C_{2^n}$, $n=8$}
\label{PDSn8}
\end{longtable}
$n=9$:
\begin{longtable}{|p{1\linewidth}|}
\hline
$k = 1022$, $r = 510^{1022}$, $s = -2^{261121}$ \\
$0$; $1^{2}0$; $1010^{3}$; $10^{3}10^{7}$; $10^{7}10^{15}$; $10^{15}10^{31}$; $10^{31}10^{63}$; $10^{63}10^{127}$; $10^{127}10^{255}$; $10^{255}10^{511}$ \\
\hline
$k = 130816$, $r = 256^{130816}$, $s = -256^{131327}$ \\
$0$; $0^{3}$; $1^{4}0^{2}$; $0^{8}1^{2}0^{2}$; $1^{16}0^{4}1^{2}0^{2}$; $1^{16}0^{16}1^{8}0^{4}1^{2}0^{2}$; $1^{8}0^{8}1^{8}0^{8}1^{8}0^{8}1^{8}0^{8}1^{8}0^{20}1^{2}0^{2}$; $(11110000)^{23}1^{6}0^{2}$; $(1100)^{95}0^{4}$; $(10)^{384}$ \\
\hline
$k = 130816$, $r = 256^{130816}$, $s = -256^{131327}$ \\
$0$; $0^{3}$; $1^{4}0^{2}$; $0^{8}1^{2}0^{2}$; $0^{20}1^{2}0^{2}$; $1^{16}0^{28}1^{2}0^{2}$; $1^{8}0^{8}1^{8}0^{8}1^{8}0^{8}1^{8}0^{8}1^{8}0^{8}1^{8}0^{4}1^{2}0^{2}$; $(11110000)^{23}1^{6}0^{2}$; $(1100)^{95}0^{4}$; $(10)^{384}$ \\
\hline
$k = 130816$, $r = 256^{130816}$, $s = -256^{131327}$ \\
$0$; $0^{3}$; $1^{4}0^{2}$; $1^{10}0^{2}$; $0^{16}1^{6}0^{2}$; $1^{16}0^{24}1^{6}0^{2}$; $1^{8}0^{8}1^{8}0^{8}1^{8}0^{8}1^{8}0^{8}1^{8}0^{8}1^{14}0^{2}$; $(11110000)^{23}0^{4}1^{2}0^{2}$; $(1100)^{95}0^{4}$; $(10)^{384}$ \\
\hline
$k = 130816$, $r = 256^{130816}$, $s = -256^{131327}$ \\
$0$; $0^{3}$; $1^{4}0^{2}$; $1^{10}0^{2}$; $1^{22}0^{2}$; $1^{16}0^{16}1^{14}0^{2}$; $1^{8}0^{8}1^{8}0^{8}1^{8}0^{8}1^{8}0^{8}1^{8}0^{16}1^{6}0^{2}$; $(11110000)^{23}0^{4}1^{2}0^{2}$; $(1100)^{95}0^{4}$; $(10)^{384}$ \\
\hline
$k = 130816$, $r = 256^{130816}$, $s = -256^{131327}$ \\
$0$; $0^{3}$; $0^{6}$; $0^{12}$; $1^{16}0^{8}$; $1^{16}0^{16}1^{8}0^{8}$; $1^{8}0^{8}1^{8}0^{8}1^{8}0^{8}1^{8}0^{8}1^{8}0^{24}$; $(11110000)^{24}$; $(1100)^{96}$; $(10)^{384}$ \\
\hline
$k = 130816$, $r = 256^{130816}$, $s = -256^{131327}$ \\
$0$; $0^{3}$; $0^{6}$; $1^{8}0^{4}$; $0^{16}1^{4}0^{4}$; $1^{16}0^{24}1^{4}0^{4}$; $1^{8}0^{8}1^{8}0^{8}1^{8}0^{8}1^{8}0^{8}1^{8}0^{8}1^{12}0^{4}$; $(11110000)^{23}0^{8}$; $(1100)^{96}$; $(10)^{384}$ \\
\hline
$k = 130816$, $r = 256^{130816}$, $s = -256^{131327}$ \\
$0$; $0^{3}$; $0^{6}$; $1^{8}0^{4}$; $1^{20}0^{4}$; $1^{16}0^{16}1^{12}0^{4}$; $1^{8}0^{8}1^{8}0^{8}1^{8}0^{8}1^{8}0^{8}1^{8}0^{16}1^{4}0^{4}$; $(11110000)^{23}0^{8}$; $(1100)^{96}$; $(10)^{384}$ \\
\hline
$k = 130816$, $r = 256^{130816}$, $s = -256^{131327}$ \\
$0$; $0^{3}$; $0^{6}$; $0^{12}$; $0^{24}$; $1^{16}0^{32}$; $1^{8}0^{8}1^{8}0^{8}1^{8}0^{8}1^{8}0^{8}1^{8}0^{8}1^{8}0^{8}$; $(11110000)^{24}$; $(1100)^{96}$; $(10)^{384}$ \\
\hline
$k = 130816$, $r = 256^{130816}$, $s = -256^{131327}$ \\
$0$; $1^{2}0$; $0^{4}10$; $0^{10}10$; $0^{22}10$; $1^{16}0^{30}10$; $1^{8}0^{8}1^{8}0^{8}1^{8}0^{8}1^{8}0^{8}1^{8}0^{8}1^{8}0^{6}10$; $(11110000)^{23}1^{4}0^{2}10$; $(1100)^{95}1^{3}0$; $(10)^{383}0^{2}$ \\
\hline
$k = 130816$, $r = 256^{130816}$, $s = -256^{131327}$ \\
$0$; $1^{2}0$; $0^{4}10$; $1^{8}0^{2}10$; $0^{16}1^{4}0^{2}10$; $1^{16}0^{24}1^{4}0^{2}10$; $1^{8}0^{8}1^{8}0^{8}1^{8}0^{8}1^{8}0^{8}1^{8}0^{8}1^{12}0^{2}10$; $(11110000)^{23}0^{6}10$; $(1100)^{95}1^{3}0$; $(10)^{383}0^{2}$ \\
\hline
$k = 130816$, $r = 256^{130816}$, $s = -256^{131327}$ \\
$0$; $1^{2}0$; $1^{5}0$; $0^{8}1^{3}0$; $0^{20}1^{3}0$; $1^{16}0^{28}1^{3}0$; $1^{8}0^{8}1^{8}0^{8}1^{8}0^{8}1^{8}0^{8}1^{8}0^{8}1^{8}0^{4}1^{3}0$; $(11110000)^{23}1^{7}0$; $(1100)^{95}0^{2}10$; $(10)^{383}0^{2}$ \\
\hline
$k = 130816$, $r = 256^{130816}$, $s = -256^{131327}$ \\
$0$; $1^{2}0$; $0^{4}10$; $0^{10}10$; $1^{16}0^{6}10$; $1^{16}0^{16}1^{8}0^{6}10$; $1^{8}0^{8}1^{8}0^{8}1^{8}0^{8}1^{8}0^{8}1^{8}0^{22}10$; $(11110000)^{23}1^{4}0^{2}10$; $(1100)^{95}1^{3}0$; $(10)^{383}0^{2}$ \\
\hline
$k = 130816$, $r = 256^{130816}$, $s = -256^{131327}$ \\
$0$; $1^{2}0$; $1^{5}0$; $0^{8}1^{3}0$; $1^{16}0^{4}1^{3}0$; $1^{16}0^{16}1^{8}0^{4}1^{3}0$; $1^{8}0^{8}1^{8}0^{8}1^{8}0^{8}1^{8}0^{8}1^{8}0^{20}1^{3}0$; $(11110000)^{23}1^{7}0$; $(1100)^{95}0^{2}10$; $(10)^{383}0^{2}$ \\
\hline
$k = 130816$, $r = 256^{130816}$, $s = -256^{131327}$ \\
$0$; $1^{2}0$; $0^{4}10$; $1^{8}0^{2}10$; $1^{20}0^{2}10$; $1^{16}0^{16}1^{12}0^{2}10$; $1^{8}0^{8}1^{8}0^{8}1^{8}0^{8}1^{8}0^{8}1^{8}0^{16}1^{4}0^{2}10$; $(11110000)^{23}0^{6}10$; $(1100)^{95}1^{3}0$; $(10)^{383}0^{2}$ \\
\hline
$k = 130816$, $r = 256^{130816}$, $s = -256^{131327}$ \\
$0$; $1^{2}0$; $1^{5}0$; $1^{11}0$; $1^{23}0$; $1^{16}0^{16}1^{15}0$; $1^{8}0^{8}1^{8}0^{8}1^{8}0^{8}1^{8}0^{8}1^{8}0^{16}1^{7}0$; $(11110000)^{23}0^{4}1^{3}0$; $(1100)^{95}0^{2}10$; $(10)^{383}0^{2}$ \\
\hline
$k = 130816$, $r = 256^{130816}$, $s = -256^{131327}$ \\
$0$; $1^{2}0$; $1^{5}0$; $1^{11}0$; $0^{16}1^{7}0$; $1^{16}0^{24}1^{7}0$; $1^{8}0^{8}1^{8}0^{8}1^{8}0^{8}1^{8}0^{8}1^{8}0^{8}1^{15}0$; $(11110000)^{23}0^{4}1^{3}0$; $(1100)^{95}0^{2}10$; $(10)^{383}0^{2}$ \\
\hline
$k = 131328$, $r = 256^{130815}$, $s = -256^{131328}$ \\
$0$; $0^{3}$; $1^{4}0^{2}$; $0^{8}1^{2}0^{2}$; $1^{16}0^{4}1^{2}0^{2}$; $1^{16}0^{16}1^{8}0^{4}1^{2}0^{2}$; $1^{8}0^{8}1^{8}0^{8}1^{8}0^{8}1^{8}0^{8}1^{8}0^{20}1^{2}0^{2}$; $(11110000)^{23}1^{6}0^{2}$; $(1100)^{95}1^{4}$; $(10)^{384}$ \\
\hline
$k = 131328$, $r = 256^{130815}$, $s = -256^{131328}$ \\
$0$; $0^{3}$; $1^{4}0^{2}$; $0^{8}1^{2}0^{2}$; $0^{20}1^{2}0^{2}$; $1^{16}0^{28}1^{2}0^{2}$; $1^{8}0^{8}1^{8}0^{8}1^{8}0^{8}1^{8}0^{8}1^{8}0^{8}1^{8}0^{4}1^{2}0^{2}$; $(11110000)^{23}1^{6}0^{2}$; $(1100)^{95}1^{4}$; $(10)^{384}$ \\
\hline
$k = 131328$, $r = 256^{130815}$, $s = -256^{131328}$ \\
$0$; $0^{3}$; $1^{4}0^{2}$; $1^{10}0^{2}$; $0^{16}1^{6}0^{2}$; $1^{16}0^{24}1^{6}0^{2}$; $1^{8}0^{8}1^{8}0^{8}1^{8}0^{8}1^{8}0^{8}1^{8}0^{8}1^{14}0^{2}$; $(11110000)^{23}0^{4}1^{2}0^{2}$; $(1100)^{95}1^{4}$; $(10)^{384}$ \\
\hline
$k = 131328$, $r = 256^{130815}$, $s = -256^{131328}$ \\
$0$; $0^{3}$; $1^{4}0^{2}$; $1^{10}0^{2}$; $1^{22}0^{2}$; $1^{16}0^{16}1^{14}0^{2}$; $1^{8}0^{8}1^{8}0^{8}1^{8}0^{8}1^{8}0^{8}1^{8}0^{16}1^{6}0^{2}$; $(11110000)^{23}0^{4}1^{2}0^{2}$; $(1100)^{95}1^{4}$; $(10)^{384}$ \\
\hline
$k = 131328$, $r = 256^{130815}$, $s = -256^{131328}$ \\
$0$; $0^{3}$; $0^{6}$; $0^{12}$; $1^{16}0^{8}$; $1^{16}0^{16}1^{8}0^{8}$; $1^{8}0^{8}1^{8}0^{8}1^{8}0^{8}1^{8}0^{8}1^{8}0^{8}1^{16}$; $(11110000)^{24}$; $(1100)^{96}$; $(10)^{384}$ \\
\hline
$k = 131328$, $r = 256^{130815}$, $s = -256^{131328}$ \\
$0$; $0^{3}$; $0^{6}$; $1^{8}0^{4}$; $0^{16}1^{4}0^{4}$; $1^{16}0^{24}1^{4}0^{4}$; $1^{8}0^{8}1^{8}0^{8}1^{8}0^{8}1^{8}0^{8}1^{8}0^{8}1^{12}0^{4}$; $(11110000)^{23}1^{8}$; $(1100)^{96}$; $(10)^{384}$ \\
\hline
$k = 131328$, $r = 256^{130815}$, $s = -256^{131328}$ \\
$0$; $0^{3}$; $0^{6}$; $1^{8}0^{4}$; $1^{20}0^{4}$; $1^{16}0^{16}1^{12}0^{4}$; $1^{8}0^{8}1^{8}0^{8}1^{8}0^{8}1^{8}0^{8}1^{8}0^{16}1^{4}0^{4}$; $(11110000)^{23}1^{8}$; $(1100)^{96}$; $(10)^{384}$ \\
\hline
$k = 131328$, $r = 256^{130815}$, $s = -256^{131328}$ \\
$0$; $0^{3}$; $0^{6}$; $0^{12}$; $0^{24}$; $1^{48}$; $1^{8}0^{8}1^{8}0^{8}1^{8}0^{8}1^{8}0^{8}1^{8}0^{8}1^{8}0^{8}$; $(11110000)^{24}$; $(1100)^{96}$; $(10)^{384}$ \\
\hline
$k = 131328$, $r = 256^{130815}$, $s = -256^{131328}$ \\
$0$; $1^{2}0$; $0^{4}10$; $1^{8}0^{2}10$; $0^{16}1^{4}0^{2}10$; $1^{16}0^{24}1^{4}0^{2}10$; $1^{8}0^{8}1^{8}0^{8}1^{8}0^{8}1^{8}0^{8}1^{8}0^{8}1^{12}0^{2}10$; $(11110000)^{23}0^{6}10$; $(1100)^{95}1^{3}0$; $(10)^{383}1^{2}$ \\
\hline
$k = 131328$, $r = 256^{130815}$, $s = -256^{131328}$ \\
$0$; $1^{2}0$; $0^{4}10$; $0^{10}10$; $0^{22}10$; $1^{16}0^{30}10$; $1^{8}0^{8}1^{8}0^{8}1^{8}0^{8}1^{8}0^{8}1^{8}0^{8}1^{8}0^{6}10$; $(11110000)^{23}1^{4}0^{2}10$; $(1100)^{95}1^{3}0$; $(10)^{383}1^{2}$ \\
\hline
$k = 131328$, $r = 256^{130815}$, $s = -256^{131328}$ \\
$0$; $1^{2}0$; $1^{5}0$; $0^{8}1^{3}0$; $0^{20}1^{3}0$; $1^{16}0^{28}1^{3}0$; $1^{8}0^{8}1^{8}0^{8}1^{8}0^{8}1^{8}0^{8}1^{8}0^{8}1^{8}0^{4}1^{3}0$; $(11110000)^{23}1^{7}0$; $(1100)^{95}0^{2}10$; $(10)^{383}1^{2}$ \\
\hline
$k = 131328$, $r = 256^{130815}$, $s = -256^{131328}$ \\
$0$; $1^{2}0$; $0^{4}10$; $0^{10}10$; $1^{16}0^{6}10$; $1^{16}0^{16}1^{8}0^{6}10$; $1^{8}0^{8}1^{8}0^{8}1^{8}0^{8}1^{8}0^{8}1^{8}0^{22}10$; $(11110000)^{23}1^{4}0^{2}10$; $(1100)^{95}1^{3}0$; $(10)^{383}1^{2}$ \\
\hline
$k = 131328$, $r = 256^{130815}$, $s = -256^{131328}$ \\
$0$; $1^{2}0$; $1^{5}0$; $1^{11}0$; $0^{16}1^{7}0$; $1^{16}0^{24}1^{7}0$; $1^{8}0^{8}1^{8}0^{8}1^{8}0^{8}1^{8}0^{8}1^{8}0^{8}1^{15}0$; $(11110000)^{23}0^{4}1^{3}0$; $(1100)^{95}0^{2}10$; $(10)^{383}1^{2}$ \\
\hline
$k = 131328$, $r = 256^{130815}$, $s = -256^{131328}$ \\
$0$; $1^{2}0$; $1^{5}0$; $0^{8}1^{3}0$; $1^{16}0^{4}1^{3}0$; $1^{16}0^{16}1^{8}0^{4}1^{3}0$; $1^{8}0^{8}1^{8}0^{8}1^{8}0^{8}1^{8}0^{8}1^{8}0^{20}1^{3}0$; $(11110000)^{23}1^{7}0$; $(1100)^{95}0^{2}10$; $(10)^{383}1^{2}$ \\
\hline
$k = 131328$, $r = 256^{130815}$, $s = -256^{131328}$ \\
$0$; $1^{2}0$; $1^{5}0$; $1^{11}0$; $1^{23}0$; $1^{16}0^{16}1^{15}0$; $1^{8}0^{8}1^{8}0^{8}1^{8}0^{8}1^{8}0^{8}1^{8}0^{16}1^{7}0$; $(11110000)^{23}0^{4}1^{3}0$; $(1100)^{95}0^{2}10$; $(10)^{383}1^{2}$ \\
\hline
$k = 131328$, $r = 256^{130815}$, $s = -256^{131328}$ \\
$0$; $1^{2}0$; $0^{4}10$; $1^{8}0^{2}10$; $1^{20}0^{2}10$; $1^{16}0^{16}1^{12}0^{2}10$; $1^{8}0^{8}1^{8}0^{8}1^{8}0^{8}1^{8}0^{8}1^{8}0^{16}1^{4}0^{2}10$; $(11110000)^{23}0^{6}10$; $(1100)^{95}1^{3}0$; $(10)^{383}1^{2}$ \\
\hline
$k = 131838$, $r = 254^{131838}$, $s = -258^{130305}$ \\
$0$; $1^{2}0$; $1010^{3}$; $10^{3}10^{7}$; $10^{7}10^{15}$; $10^{15}10^{15}1^{16}$; $10^{7}1^{16}0^{8}10^{7}1^{16}0^{8}1^{8}0^{8}1^{8}0^{8}$; \\$(1000111111110000111100001111000011110000111100001111000011110000)^{2}(11110000)^{8}$; \\$(101111001100110011001100110011001100110011001100110011001100110011001100110011001$\\$10011001100110011001100110011001100110011001100)^{2}(1100)^{32}$; $1^{3}(01)^{127}1^{2}(01)^{254}0$ \\
\hline
$k = 260610$, $r = 2^{260610}$, $s = -510^{1533}$ \\
$0$; $0^{3}$; $101010$; $1^{3}(0111)^{2}0$; $1^{7}01^{7}01^{7}0$; $1^{15}01^{15}01^{15}0$; $1^{31}01^{31}01^{31}0$; $1^{63}01^{63}01^{63}0$; $1^{127}01^{127}01^{127}0$; $1^{255}01^{255}01^{255}0$ \\
\hline
\caption{The PDS in $C_{2^n}\times C_{2^n}$, $n=9$}
\label{PDSn9}
\end{longtable}

\section{Acknowledgments}

We are grateful to Sam Houston State University (SHSU) and the IT\textsf{@}Sam department at SHSU for their assistance in building and maintaining the server on which we obtained our computational results. 
The second author is grateful for the hospitality of the University of Richmond during several visits to discuss partial difference sets with James A. Davis and John B. Polhill.

\clearpage

\bibliographystyle{plain}
\bibliography{PDSbib}

\end{document}